\newtheorem{theorem}{Theorem}[section]
\newtheorem{lemma}[theorem]{Lemma}
\newtheorem{proposition}[theorem]{Proposition}
\newtheorem{corollary}[theorem]{Corollary}
\numberwithin{equation}{section}
\theoremstyle{remark}
\newtheorem*{remark}{Remark}
\lstdefinestyle{CStyle}{
    basicstyle=\footnotesize,
    breakatwhitespace=false,         
    breaklines=true,                 
    captionpos=b,                    
    keepspaces=true,                 
    numbers=left,                    
    numbersep=5pt,                  
    showspaces=false,                
    showstringspaces=false,
    showtabs=false,                  
    tabsize=2,
    language=C
}
\title[A new zero-density estimate and PNT]{A new zero-density estimate for $\zeta(s)$ and the error term in the Prime Number Theorem}
\author[C. Bellotti]{Chiara  Bellotti}
\address{School of Science\\
The University of New South Wales, Canberra, Australia}
\email{c.bellotti@unsw.edu.au}
\keywords{Riemann zeta-function, zero-density estimate, prime number theorem}
\subjclass[2020]{11M06, 11M26, 11N05,  11N56}
\begin{document}
\maketitle
\vspace{-2.5em}
\begin{abstract}
We will provide a new type of zero-density estimate for $\zeta(s)$ when $\sigma$ is sufficiently close to $1$. In particular, we will show that $N(\sigma,T)$ can be bounded by an absolute constant when $\sigma$ is sufficiently close to the left edge of the Korobov--Vinogradov zero-free region. As a consequence, we provide the optimal error term in the prime number theorem of the form 
$$
\psi(x)-x \ll x\exp \left\{-(1-\varepsilon) \omega(x)\right\},\qquad \omega(x):=\min _{t \geq 1}\{\nu(t) \log x+\log t\},
$$
where $\nu(t)=A_0(\log t)^{-2/3}(\log\log t)^{-1/3}$ is a decreasing function such that $\zeta(\sigma+it)\neq 0$ for $\sigma\ge 1-\nu(t)$. Precisely, we will show that we can take $\varepsilon=0$.
\end{abstract}
\section{Introduction}\label{section1}
The aim of this article is to provide a new type of zero-density estimate for the Riemann zeta function $\zeta(s)$ when $\sigma$ is sufficiently close to the $1$-line. More precisely, given $1/2<\sigma<1$, $T>0$, we will prove a new type of upper bound for the quantity$$
N(\sigma, T)=\#\{\rho=\beta+i \gamma: \zeta(\rho)=0,0<\gamma<T,\  \sigma<\beta<1\},
$$
which counts the number of non-trivial zeros of $\zeta$ with real part greater than the fixed value $\sigma$ and imaginary part up to $T$. Zero-density estimates usually take the form $N(\sigma,T)\ll T^{f(\sigma)}(\log T)^{D}$, where $f(\sigma)$ is a function of $\sigma$ which tends to $0$ when $\sigma$ approaches $1$. If the Riemann Hypothesis is true, then we have $N(\sigma,T)=0$ for every $1/2<\sigma<1$. It is worthwhile mentioning that the importance of studying $N(\sigma,T)$ allows one to get improvements throughout a variety of topics in number theory, including the error term in the prime number theorem.

Our main theorem is the following result:
\begin{theorem}\label{theorem1general}
Consider $T$ large and $\sigma\ge 1-\frac{K(T)}{(\log T)^{2/3}(\log\log T)^{1/3}}$, with $\alpha<1$, $K(T)\ll (\log\log T)^{\alpha}$. Then 
the following estimate holds
\begin{align*}
    &N(\sigma,T)\ll\exp(B(\log\log T)^\alpha)K(T)=o(\log T),\end{align*}
    where $B>0$ is an effective computable constant.
\end{theorem}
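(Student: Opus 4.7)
The plan is to combine the classical zero-detection via a Möbius mollifier with the Halász--Montgomery large-sieve inequality, feeding in mean-value estimates specifically tuned to the Korobov--Vinogradov near-boundary region. Since the claimed bound contains no fixed positive power of $\log T$, only a factor $\exp(B(\log\log T)^\alpha)K(T)$, one is forced to go beyond the routine $(\log T)^B$ losses of classical density technology: the standard losses must either be absorbed into the exponential factor by choosing the parameter $K(T)^{3/2}(\log\log T)^{-1/2}$ against $K(T)$, or suppressed by careful choice of the mollifier length.

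In more detail I would proceed as follows. First, introduce the mollifier $M_X(s)=\sum_{n\le X}\mu(n)n^{-s}$ and use the identity $\zeta(s)M_X(s)=1+\sum_{n>X}a_n n^{-s}$, with $a_n=\sum_{d\mid n,\,d\le X}\mu(d)$, so that at every zero $\rho$ with $\Re\rho\ge\sigma$ a truncated detector polynomial $D(\rho)=\sum_{X<n\le Y}a_n n^{-\rho}$ satisfies $|D(\rho)|\ge 1/2$, provided the tail beyond $Y$ is shown to be negligible via the Korobov--Vinogradov pointwise bound on $|\zeta(\sigma+it)|$. Next, reduce to a well-spaced subset of zeros $\{\rho_r\}$ with $|\gamma_r-\gamma_{r'}|\ge 1$, losing at most a factor $\log T$. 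Then apply a Halász--Montgomery large-sieve inequality to bound the size of the subset by an $L^2$-type mass of the detector coefficients times a high power mean $\int_0^T|\zeta(\sigma+it)|^{2k}\,dt$. Finally, substitute the Vinogradov--Korobov pointwise bound $|\zeta(\sigma+it)|\ll \exp\bigl(C(1-\sigma)^{3/2}\log t\,(\log\log t)^{1/3}\bigr)$ into the moment and optimise the three parameters $X$, $Y$, and $k$ as functions of $T$ and $K(T)$.

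The hard part will be the last step, together with the parameter balance. Using $(1-\sigma)^{3/2}\log T\asymp K(T)^{3/2}(\log\log T)^{-1/2}$, the KV pointwise factor contributes $\exp\bigl(CK(T)^{3/2}(\log\log T)^{-1/2}\bigr)\le \exp(CK(T))$ once $K(T)\le \log\log T$, matching the target $\exp(B(\log\log T)^\alpha)$ when $K(T)\asymp (\log\log T)^\alpha$. What is delicate is choosing $X$ sufficiently sub-polynomial in $T$ that the mollifier tail and the divisor-type sums $\sum|a_n|^2 n^{-2\sigma}$ remain tame, while taking $k$ large enough for the moment estimate to be near-optimal but small enough that factors like $d(n)^{2k}$ do not destroy the bound. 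The hypothesis $\alpha<1$ is precisely what permits this balance to be struck without a surviving positive power of $\log T$; once it is, the estimate $N(\sigma,T)\ll \exp(B(\log\log T)^\alpha)K(T)$ follows, and $(\log\log T)^\alpha=o(\log\log T)$ yields the $o(\log T)$ conclusion.
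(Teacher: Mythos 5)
Your high-level strategy (mollifier detector plus Hal\'asz--Montgomery, with Korobov--Vinogradov exponential-sum input) points in the right direction, but it contains a fatal loss that the paper is specifically engineered to avoid, and it also misses the one genuinely new ingredient that makes the theorem possible.

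\textbf{The well-spacing step destroys the bound.} You propose to ``reduce to a well-spaced subset of zeros $\{\rho_r\}$ with $|\gamma_r-\gamma_{r'}|\ge 1$, losing at most a factor $\log T$.'' That factor of $\log T$ is not affordable here: the target is $N(\sigma,T)\ll \exp(B(\log\log T)^\alpha)K(T)=o(\log T)$, so any $\log T$ leakage in the reduction already exceeds the entire claimed bound. The paper instead subdivides into boxes of height $\delta=1-\sigma_0 \approx K(T)(\log T)^{-2/3}(\log\log T)^{-1/3}$, obtaining two $\delta$-well-spaced systems, and then applies a \emph{local} zero-counting lemma (Lemma~\ref{numberzerosdisknew}): the number of zeros in a disk of radius $\ll K(\log T)^{-2/3}(\log\log T)^{-1/3}$ centered at $1+it$ is $\ll K$, proved via Lemma $\alpha$/$\beta$ of Titchmarsh and the explicit bound of Proposition~\ref{mybound}. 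This local lemma is the real novelty and is exactly what replaces the usual $\log T$ per box by $K(T)$; nothing in your proposal substitutes for it.

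\textbf{Two further sources of unaffordable $\log$ powers.} First, with the plain M\"obius mollifier $M_X(s)=\sum_{n\le X}\mu(n)n^{-s}$ the detector coefficients $a_n=\sum_{d\mid n,\ d\le X}\mu(d)$ have $L^2$-mass $\sum_{X<n\le Y}|a_n|^2 n^{-1}\asymp \log(Y/X)$, another $\log$ that cannot be absorbed. The paper uses the Barban--Vehov weights $\Psi(n)\Theta(n)$ precisely because $\sum \Psi^2(n)/n=O(1)$ (Lemma~\ref{an2bn}), and because $\Theta$ essentially reduces to $\Lambda(n)/\log W$ on the relevant range, which is what lets the off-diagonal bilinear sums be controlled via exponential-sum estimates and the $\sum 1/[d,e]\ll(\log W)^3$ bound. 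Second, the ``$L^2$-mass times a high power mean $\int_0^T|\zeta(\sigma+it)|^{2k}\,dt$'' form of Hal\'asz--Montgomery is the standard route to $T^{f(\sigma)}(\log T)^D$-type density bounds, but the moment itself introduces $\log$-power losses; the paper instead uses Jutila's bilinear form of the inequality (Lemma~\ref{lemma7jutila}) and estimates the off-diagonal terms directly by partial summation, the Korobov--Vinogradov exponential-sum bound (Lemma~\ref{expsumbel}), and the second-derivative test (Lemma~\ref{secondderiv}), which keeps the loss to $\log|J|\cdot(XY)^{2(1-\sigma)}$ and eventually self-improves to $|J|\ll(XY)^{2(1+o(1))(1-\sigma)}$.

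In short, the skeleton is recognizable, but without (i) $\delta$-spacing at the scale $1-\sigma_0$, (ii) the disk zero-counting lemma near the $1$-line, (iii) Barban--Vehov weights in place of the plain M\"obius mollifier, and (iv) the bilinear (rather than moment) Hal\'asz--Montgomery estimate, the argument leaks several powers of $\log T$ and cannot reach $\exp(B(\log\log T)^\alpha)K(T)$.
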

In particular, we will show that $N(\sigma,T)$ can be bounded by an absolute constant when $\sigma$ is sufficiently close to the edge of the Korobov--Vinogradov zero-free region
\begin{equation}\label{kvzerofree}
     \sigma\ge  1-\frac{A_0}{\log^{2/3}T(\log\log T)^{1/3}}=: 1-\nu(T).
\end{equation} This type of result will lead to improvements on the error term in the prime number theorem.
\begin{theorem}\label{edgezerofree}
Let $A_0$ be the Korobov--Vinogradov zero-free region constant in \eqref{kvzerofree}. Consider $A>A_0$ a fixed positive constant and $T$ large. Then, for every $\sigma$ satisfying $\sigma\ge 1-A(\log T)^{-2/3}(\log\log T)^{-1/3}$, one has $N(\sigma,T)=O(1)$. 
\end{theorem}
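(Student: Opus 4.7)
The plan is to derive Theorem~\ref{edgezerofree} as an immediate specialisation of Theorem~\ref{theorem1general}, taking the free parameter $K(T)$ to be the fixed constant $A$ and setting the exponent $\alpha=0$ in the hypothesis of that theorem.

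First I would dispose of the easy range: for $\sigma\ge 1-A_0(\log T)^{-2/3}(\log\log T)^{-1/3}$ the Korobov--Vinogradov zero-free region \eqref{kvzerofree} already gives $N(\sigma,T)=0$, which is trivially $O(1)$. Hence it suffices to establish the bound in the narrow strip
$$
1-\frac{A}{(\log T)^{2/3}(\log\log T)^{1/3}}\le \sigma < 1-\frac{A_0}{(\log T)^{2/3}(\log\log T)^{1/3}},
$$
which is non-empty precisely because $A>A_0$.

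Second, in this strip I would apply Theorem~\ref{theorem1general} with the constant choice $K(T)\equiv A$. Since $A=O(1)=O((\log\log T)^{0})$, the hypothesis $K(T)\ll(\log\log T)^{\alpha}$ is satisfied with $\alpha=0$, which is admissible since Theorem~\ref{theorem1general} only requires $\alpha<1$. Substituting these values into the conclusion yields
$$
N(\sigma,T)\ll \exp\bigl(B(\log\log T)^{0}\bigr)\cdot A = e^{B}A = O(1),
$$
which is exactly the desired bound.

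The argument is essentially a direct specialisation and has no substantive obstacle beyond checking that the endpoint $\alpha=0$ is genuinely permitted by Theorem~\ref{theorem1general}. The only point worth verifying is that all implicit constants in the proof of that theorem remain uniform in $T$ when $K(T)$ is held constant rather than allowed to grow, and that no intermediate step (for instance, the choice of mollifier length or the absorption of logarithmic losses) tacitly requires $\alpha>0$ or $K(T)\to\infty$. Provided this uniformity is in place, no further work is needed.
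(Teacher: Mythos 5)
Your proposal is correct and matches the paper's (implicit) approach: Theorem \ref{edgezerofree} is indeed obtained by specialising Theorem \ref{theorem1general} with $\alpha=0$ and $K(T)\equiv A$, and Section 4 of the paper (the proof of Theorem \ref{th2}) confirms that $\alpha=0$ is genuinely admissible by making explicit constant choices $c_u, c_x, \dots$ depending on $A$ and $D$ so that the intermediate error terms such as \eqref{kvlowerbound} and \eqref{constc5} remain $o(1)$. The uniformity caveat you flag is precisely what that section discharges; since the constants can be chosen once $A$ is fixed, the specialisation is legitimate and no further work is needed beyond what you describe.
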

Theorem \ref{edgezerofree} can be made more precise. Consider the exponential sum
\begin{equation}\label{Sntkv}
S(N, t):= \max _{N<R \leq 2 N}\left|\sum_{N \leq n \leq R} n^{-i t}\right|,
\end{equation}
and denote $\lambda=\log t/\log N$. 
\begin{theorem}\label{th2}
    Consider $S(N,t)$ defined in \eqref{Sntkv} and assume that, for $1 \leq N \leq t$, $S(N, t) \ll N^{1-1/\left(D \lambda^2\right)}$, where $D>0$ is a positive constant. Let $A>A_0$ be a fixed positive constant, where $A_0$ is the Korobov--Vinogradov zero-free region constant in \eqref{kvzerofree}. Then, for $T$ large, and  $\sigma\ge 1-A(\log T)^{-2/3}(\log\log T)^{-1/3}$, the following estimate holds:
$$N(\sigma,T)\ll e^{cA},\qquad c=6\left(\frac{8D}{3}\right)^{1/3}+12.$$
\end{theorem}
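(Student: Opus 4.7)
The approach is to convert each zero $\rho=\beta+i\gamma$ of $\zeta(s)$ with $\beta\ge\sigma$ and $0<\gamma\le T$ into a statement that some Dirichlet polynomial is abnormally large at $\rho$, and then use the hypothesis $S(N,t)\ll N^{1-1/(D\lambda^2)}$ to limit how many zeros can cause such largeness. Concretely, via a smoothed Mellin inversion of $\zeta$ against an exponential kernel (or equivalently an approximate functional equation), one extracts from $\zeta(\rho)=0$ a lower bound
$$1\ll \Big|\sum_{n\le Y} a_n\, n^{-\rho}\Big|, \qquad Y=T^{\theta},$$
where $|a_n|\le d_k(n)$ for a fixed $k$ and $\theta<1$ is chosen just below the edge where the Vinogradov hypothesis still has teeth.

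\textbf{Dyadic pigeonhole and exponential-sum input.} Splitting the sum into $O(\log T)$ dyadic intervals $[N,2N]\subset[1,Y]$, the pigeonhole principle supplies, for each zero, a dyadic range on which $|\sum_{N\le n<2N}a_n n^{-\rho}|\gg 1/\log T$. Factoring out the $n^{-\beta}$ weight forces the pure exponential sum $|\sum a_n n^{-i\gamma}|$ to exceed roughly $N^{\sigma}/\log T$. Handling the divisor-type weights by Heath--Brown's identity (or repeated Cauchy--Schwarz together with $\sum_{n\sim N}d_k(n)^2\ll N(\log N)^{k^2-1}$) reduces the condition to a lower bound on $S(N,\gamma)$, which directly confronts the hypothesis $S(N,\gamma)\ll N^{1-1/(D\lambda^2)}$ with $\lambda=\log\gamma/\log N$.

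\textbf{Counting and optimisation.} After pigeonholing on the dyadic scale $N$, I would select a well-spaced subset of $R$ ordinates (losing a $\log T$-factor) and combine the individual large-value condition with a second-moment bound of Halász--Montgomery type. The resulting inequality takes the schematic form $R \ll N^{\Psi(\sigma,\lambda,D)}(\log T)^{O(1)}$, with $\Psi$ linear in $1-\sigma$ and in $\lambda^{-2}$. The decisive step is balancing these two contributions by choosing $\log N$ optimally; writing $1-\sigma=A(\log T)^{-2/3}(\log\log T)^{-1/3}$, the cubic relation $(1-\sigma)\log N\cdot(\log N/\log T)^{2}\sim 1/D$ is solved by $\log N \asymp (8D/3)^{1/3}(\log T)^{2/3}(\log\log T)^{1/3}$, which converts the upper bound on $R$ into the claimed exponential factor $e^{cA}$ with $c=6(8D/3)^{1/3}+12$. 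The main obstacle lies precisely in this final optimisation: every logarithmic loss from smoothing, dyadic pigeonholing, well-spacing, and the divisor-like coefficients $a_n$ enters multiplicatively in front of the leading exponent, so keeping them from contaminating the absolute constant $c$ demands careful bookkeeping, and the appearance of the cube root $(8D/3)^{1/3}$ reflects the cubic balance between the three scales $\log T$, $\log\log T$, and $\log N$.
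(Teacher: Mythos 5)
Your high-level blueprint --- zero detection via a Dirichlet polynomial, large-values plus a Hal\'asz--Montgomery step, an exponential-sum input from the Vinogradov-type hypothesis, and a cubic balance producing $(8D/3)^{1/3}$ --- does correctly identify the shape of the argument. But as written your route contains several $\log T$-sized losses that are not a matter of ``careful bookkeeping'': they are structurally unavoidable in the setup you describe, and each one alone is fatal when the target is $N(\sigma,T)=O(1)$.

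Specifically, three places leak polylogarithms. First, your zero-detecting polynomial is extracted from a smoothed approximate formula for $\zeta$ with divisor-type coefficients $|a_n|\le d_k(n)$; any Hal\'asz--Montgomery application then carries the mean value $\sum_{n\le Y}|a_n|^2/n \asymp (\log Y)^{k^2}$, and $\log Y \asymp (\log T)^{2/3}(\log\log T)^{1/3}$, so this factor is $(\log T)^{\Omega(1)}$. The paper sidesteps this by building the detector out of Barban--Vehov mollifier coefficients $\Psi(n)\Theta(n)$ with $\Psi(n)=0$ for $2\le n\le U$; the crucial Lemma on $\sum_{U\le n\le XY}\Psi^2(n)/n$ gives an $O(1)$ bound, and the detection identity is $M(s)\zeta(s)=1+\sum_{U<n\le XY}\Psi(n)\Theta(n)n^{-s}+\text{small}$, which yields a clean $|J|(1-o(1))$ lower bound with no log loss. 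Second, your dyadic pigeonhole to a single block $[N,2N]$ loses a further $\log T$; the paper never pigeonholes to a dyadic block --- the dyadic decomposition only appears \emph{inside} the estimation of the off-diagonal bilinear term, where its $\log(XY/U)$ factor is harmless because it is dominated by the exponential saving from the Vinogradov hypothesis. Third, you propose to ``select a well-spaced subset of $R$ ordinates (losing a $\log T$-factor)''; the paper well-spaces at scale $\delta=1-\sigma_0$ (not unit scale), so the even/odd split only costs a factor $2$, and the passage from the $\delta$-well-spaced system $|J|$ back to $N(\sigma,T)$ is through the disk-zero-count (Lemma on the number of zeros in a small disk around $1+it$), which contributes only a factor $\ll K(T)=A=O(1)$. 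Finally, the explicit constant $c=6(8D/3)^{1/3}+12$ does not come out of a single balance in $\log N$; it arises as $B=2(c_x+c_y)$ after propagating the chain of parameter constraints $c_x>c_v+c_u$, $c_y>c_u$, $c_w<c_u/2$, together with the two inequalities $c_x^3/(DA^3)>1+\varepsilon$ and $(c_u-2c_w)^3/(DA^3)>8/3+\varepsilon$ that make the exponential-sum error terms $o(1)$; the cubic root you identified is one of these thresholds, but the additive $+12$ is invisible in your account. In summary, the approach would prove a bound $N(\sigma,T)\ll (\log T)^{O(1)}e^{cA}$ rather than the asserted $e^{cA}$, and a genuinely different (mollified, self-improving Hal\'asz--Montgomery) structure is needed to eliminate the log factors.
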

The current best known values \cite{bellotti2023explicit} for $A_0$ in \eqref{kvzerofree} when $T$ is sufficiently large and $D$ in Theorem \ref{th2} are $1/48.0718$ and $132.94357$ respectively. Inserting these values into Theorem \ref{th2} we obtain the following estimate.
\begin{corollary}\label{cor3}
    Consider $A>1/48.0718$ a fixed positive constant and $T$ large. the following estimate holds for every $\sigma\ge 1-A(\log T)^{-2/3}(\log\log T)^{-1/3}$:
    \[
    N(\sigma,T)\ll e^{55A}.
    \]
\end{corollary}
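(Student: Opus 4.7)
The plan is to obtain Corollary \ref{cor3} as an immediate numerical specialization of Theorem \ref{th2}. Since the corollary statement refers to the \emph{same} range of $\sigma$ as in Theorem \ref{th2}, the only task is to verify that the hypotheses are met with $A_0=1/48.0718$ and $D=132.94357$, and then to compute the resulting constant $c$.

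First I would invoke \cite{bellotti2023explicit}, which supplies both ingredients simultaneously: the explicit admissible constant $A_0=1/48.0718$ in the Korobov--Vinogradov zero-free region \eqref{kvzerofree} for $T$ sufficiently large, and the exponential sum bound $S(N,t)\ll N^{1-1/(D\lambda^2)}$ for $1\le N\le t$ with $D=132.94357$ and $\lambda=\log t/\log N$. These are exactly the two hypotheses required by Theorem \ref{th2}.

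Substituting $D=132.94357$ into the constant from Theorem \ref{th2},
\[
c=6\left(\tfrac{8D}{3}\right)^{1/3}+12,
\]
one computes $8D/3\approx 354.516$, so $(8D/3)^{1/3}\approx 7.074$ and $c\approx 54.45$. In particular $c<55$, so $e^{cA}\le e^{55A}$ for every $A>0$, and Theorem \ref{th2} gives $N(\sigma,T)\ll e^{cA}\ll e^{55A}$ throughout the stated range of $\sigma$.

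There is no genuine obstacle here beyond a careful numerical check: the only subtlety is that one should work with enough decimal places in $(8D/3)^{1/3}$ to be safely below $55/6=9.1\overline{6}$ after adding the $+12$, which the above computation comfortably accomplishes.
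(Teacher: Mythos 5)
Your proposal is correct and mirrors the paper's own derivation exactly: Corollary \ref{cor3} is obtained by substituting the explicit values $A_0=1/48.0718$ and $D=132.94357$ from \cite{bellotti2023explicit} into Theorem \ref{th2}, and then noting that $c=6(8D/3)^{1/3}+12\approx 54.45<55$. The numerical check is accurate and the appeal to the same source for both the zero-free region constant and the exponential sum bound is precisely what the paper does.
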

\begin{remark}
    Note that the value $55$ in Corollary \ref{cor3} can be reduced if the constant $D$ appearing in the bound for $S(N,t)$ can be improved.
\end{remark}
It is worthwhile mentioning that Theorem \ref{theorem1general}, Theorem \ref{th2} and Corollary \ref{cor3} can be all made fully explicit. However, the aim of this paper is to focus on the theoretical aspect of this new type of zero-density bound and how we can benefit from Theorem \ref{th2} to improve the error term in the prime number theorem $\psi(x)\sim x$, where $\psi(x)=\sum_{\substack{p^k \leq x}} \log p$, with $p$ is a prime. More precisely, one aims to find sharp bounds for the quantity
\begin{equation}\label{Delta}
    \Delta(x)=\frac{|\psi(x)-x|}{x}.
\end{equation}
Ingham \cite{ingham1932distribution}  showed that if $\zeta(s)$ has no zeroes with real part $\sigma \geq 1-\nu(t)$, where $\nu(t)$ is a decreasing function, then one has
$$
\Delta(x) \ll \exp \left\{-\frac{1}{2}(1-\varepsilon) \omega(x)\right\},\qquad \omega(x):=\min _{t \geq 1}\{\nu(t) \log x+\log t\}.
$$
Pintz \cite{pintz80} showed that Ingham's bound can be substantially improved by replacing the factor $1-\varepsilon$ with $2-\varepsilon$, getting
$$
\Delta(x) \ll \exp \left\{-(1-\varepsilon) \omega(x)\right\}.
$$
If $\nu(t)$ is the classical zero-free region, one can take $\varepsilon=0$ as shown in \cite{johnston2024zerodensityestimatesoptimalityerror}. If instead $\nu(t)=A_0(\log t)^{-2/3}(\log\log t)^ {-1/3}$ is the Korobov--Vinogradov zero-free region, then
\begin{equation}\label{defomega}
    \omega(x)=\left(\frac{5^6A_0^3}{2^2 \cdot 3^4 }\right)^{1 / 5} \frac{(\log x)^{3 / 5}}{(\log \log x)^{1 / 5}}=d \frac{(\log x)^{3 / 5}}{(\log \log x)^{1 / 5}},
\end{equation}
and
\begin{equation}\label{kvnotoptimal}
    \Delta(x) \ll \exp \left\{-(1-\varepsilon)d (\log x)^{3/5}(\log\log x)^{-1/5}\right\}.
\end{equation}
A natural question is whether it is possible to prove the converse of Ingham's result. More precisely, assuming that $\zeta(s)$ has (perhaps infinitely many) zeros in the region $1-\nu (t)$, with $\nu(t)=A(\log t)^{-2/3}(\log\log t)^ {-1/3}$, is it possible to obtain $\varepsilon=0$ in \eqref{kvnotoptimal}? In the affirmative case, we would have the optimal error term in the prime number theorem of the form \eqref{kvnotoptimal}. Partial results of the converse of Ingham's theorem can be found in \cite{stas_uber_1961, pintz80}.
In \cite{johnston2024zerodensityestimatesoptimalityerror}, an explicit expression for $\varepsilon$ in terms of $\omega(x)$ is provided, with $\varepsilon= (\log x)^9(\log \log x)^{-3}$ (see \cite{broucke2025connectionzerofreeregionserror} for similar results regarding Beurling zeta functions). We will improve \cite{johnston2024zerodensityestimatesoptimalityerror} by showing that it is possible to take $\varepsilon=0$, proving the converse of Ingham's result and getting the optimal error term in the prime number theorem of the form \eqref{kvnotoptimal}.
\begin{theorem}\label{th3}
Consider $\Delta$ defined in \eqref{Delta}. The following estimate holds:
    \begin{equation*}
        \Delta(x)\ll \exp(55A_0)\exp(-\omega(x)),
    \end{equation*}
    where $A_0$ is the Korobov--Vinogradov zero-free region constant in \eqref{kvzerofree} and $\omega(x)$ is defined in \eqref{defomega}.
\end{theorem}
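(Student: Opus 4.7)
My plan is to feed Corollary \ref{cor3} into the classical Ingham--Pintz framework for the prime number theorem error term. The gain over \cite{johnston2024zerodensityestimatesoptimalityerror} is concentrated entirely at the extreme edge of the Korobov--Vinogradov zero-free region: the earlier argument needed a density bound of the form $N(\sigma,T)\ll(\log T)^{c}$ just to the left of $1-\nu(T)$, which forced a log-loss in the exponent, whereas Corollary \ref{cor3} replaces it with a bound that is uniformly $O(1)$ in $T$.

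I would start from the truncated explicit formula
\[
\psi(x) - x = -\sum_{|\gamma|\leq T}\frac{x^\rho}{\rho} + O\!\left(\frac{x(\log xT)^2}{T}\right),
\]
choosing $\log T$ near the minimizer of $\nu(t)\log x+\log t$, so that $\log T\asymp(\log x)^{3/5}(\log\log x)^{-1/5}$ and the Perron error is already $\ll xe^{-\omega(x)}$. The key per-zero observation is that, since the Korobov--Vinogradov zero-free region forces $\beta\leq 1-\nu(|\gamma|)$ for every non-trivial zero, one has
\[
\left|\frac{x^\rho}{\rho}\right| \leq \frac{x}{|\gamma|}\exp\bigl(-\nu(|\gamma|)\log x\bigr) \leq xe^{-\omega(x)}
\]
whenever $|\gamma|\geq 1$, directly from the definition of $\omega(x)$ as a minimum. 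Crucially, zeros with $\beta$ strictly below $1-\nu(|\gamma|)$ are discounted by an additional exponential factor, so that only zeros very close to the edge of the zero-free region contribute nontrivially.

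To count the contributing zeros I would rewrite the sum as a Stieltjes integral against $N(\sigma,T)$ and split it at $\sigma_A = 1-A\nu(T)$ for some $A>A_0$. The range $1/2\leq\sigma\leq\sigma_A$ is handled by classical polynomial density estimates (e.g.\ Ingham's or Huxley's): any $(\log T)^c$ loss is swamped by the exponential savings $x^{-(A-1)\nu(T)}$ produced by the gap between $\sigma_A$ and $1-\nu(T)$, and the contribution of zeros with $\beta<1/2$ is trivially $\ll x^{1/2+\varepsilon}$, much smaller than $xe^{-\omega(x)}$ because $\omega(x)=o(\log x)$. On the thin sliver $\sigma_A<\sigma<1-\nu(T)$, Corollary \ref{cor3} gives $N(\sigma,T)\ll e^{55A}$ uniformly in $T$, and each zero there contributes at most $xe^{-\omega(x)}$ by the per-zero bound. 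Summing both pieces and letting $A\downarrow A_0$ yields $|\psi(x)-x|\ll\exp(55A_0)\cdot xe^{-\omega(x)}$, which is the claim after dividing by $x$.

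The main obstacle is the careful bookkeeping of the $\log T$-type factors from the dyadic decomposition in $|\gamma|$ and from the classical density bounds used on $\sigma\leq\sigma_A$: they must be absorbed into the exponential savings without inflating the final constant $55A_0$. Because $\nu(T)\log x$ is of order $\omega(x)\asymp(\log x)^{3/5}$ while the $\log T$-losses are only polylogarithmic in $\log x$, this absorption succeeds as in \cite{pintz80, johnston2024zerodensityestimatesoptimalityerror}. The novelty is that in those works the analogous absorption failed in the thin sliver next to $1-\nu(T)$, producing the $\varepsilon$-loss; Corollary \ref{cor3} patches this failure by bounding $N(\sigma,T)$ by a constant right at the edge.
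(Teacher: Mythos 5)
Your outline follows the same architecture as the paper: truncate the explicit formula at $T=\exp(2\omega(x))$, split the zero sum at a line $\sigma_0$ between the edge of the Korobov--Vinogradov region and the interior, handle the interior by Pintz--type density arguments, and handle the thin sliver near the edge with Corollary~\ref{cor3}. The per--zero observation $|x^{\rho}/\rho|\le x\exp(-\omega(x))$ is also correct and is a nice shortcut. Where you diverge from the paper is in the endgame, and that is where a genuine gap sits.

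You estimate the sliver contribution by the crude product ``(number of zeros in the sliver)$\times$(per--zero bound)'' $\ll e^{55A}\cdot xe^{-\omega(x)}$ and then ``let $A\downarrow A_0$''. But this is a pointwise statement in $A$ whose implied constant degrades as $A\to A_0$, because the companion estimate for $\Sigma_1$ (the interior, $\beta\le\sigma_A$) gets weaker as the cut $\sigma_A$ moves toward the edge. Concretely: Pintz's method gives $\Sigma_1\ll\exp\bigl(-(1-\varepsilon)\,(A/A_0)^{3/5}\omega(x)\bigr)$, and for this to beat $e^{-\omega(x)}$ you need $(1-\varepsilon)(A/A_0)^{3/5}\ge 1$, i.e.\ $A\ge A_0(1-\varepsilon)^{-5/3}$. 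So ``$A\downarrow A_0$'' can only be carried out if $A=A(x)$ tends to $A_0$ \emph{coordinated} with $\varepsilon=\varepsilon(x)\to 0$ from \cite{johnston2024zerodensityestimatesoptimalityerror}; with a fixed admissible $A$ your argument would leave a factor $e^{55A}$, not $e^{55A_0}$. You state the claim as though one could pass to the limit freely, which is a gap.

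The paper avoids this delicacy. It \emph{fixes} the cut at $\sigma_0=1-100A_0(\log T)^{-2/3}(\log\log T)^{-1/3}$, disposes of $\Sigma_1$ by Pintz with plenty of room (the effective $\omega$ is $100^{3/5}\omega(x)$, so $\varepsilon=1/100$ suffices), and then writes $\Sigma_2$ as a Riemann--Stieltjes integral against $N(\sigma,\gamma)$. After substituting $\sigma=1-A(\log T)^{-2/3}(\log\log T)^{-1/3}$, the inner integral is $\int_{A_0}^{100A_0}e^{55A}\,x^{\sigma(A)-1}\,dA$; the integrand decays exponentially in $A$ (the $x^{\sigma-1}$ decay dominates the $e^{55A}$ growth), so the integral is dominated by its left endpoint $A=A_0$ and the factor $e^{55A_0}$ falls out automatically. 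This is the mechanism you are missing: the paper does not shrink the sliver, it integrates over the whole window $[A_0,100A_0]$ and the exponential weight localises the contribution at $A=A_0$. Replacing the integral by a single worst--case count, as you do, loses this localisation and forces the limiting argument. If you retain your per--zero bound but replace ``count $\times$ per--zero bound'' with the Stieltjes integral (or, equivalently, a fine slicing in $A$ with summation), the proof closes cleanly.

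One smaller remark: ``classical polynomial density estimates'' alone, together with the gain $x^{-(A-1)\nu(T)}$ you cite, do not directly give $\Sigma_1\ll e^{-\omega(x)}$; one needs the full Pintz optimisation, which you gesture at by citing \cite{pintz80, johnston2024zerodensityestimatesoptimalityerror} but should spell out, since it is precisely what makes the interior range cost-free. Also the per--zero inequality requires $|\gamma|$ large enough for $\nu(|\gamma|)$ to make sense; a fixed cutoff $H$ (as in the paper) is needed for small ordinates, though this is minor.
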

Note that Theorem \ref{th2} is equivalent to  \eqref{kvnotoptimal} with $\varepsilon=0$. Theorem \ref{th2} has an immediate corollary. Consider
$$
\pi(x)=\sum_{p \leq x} 1, \qquad \theta(x)=\sum_{p \leq x} \log p,
$$
where $p$ is a prime, and define the error terms
\begin{equation}\label{delta12}
    \Delta_1(x)=\frac{|\pi(x)-\operatorname{li}(x)|}{x / \log x}, \qquad \Delta_2(x)=\frac{|\theta(x)-x|}{x}.
\end{equation}
\begin{corollary}
Given $\Delta_{i=1,2}$, as in \eqref{delta12}, the following estimate holds:
    \begin{equation*}
        \Delta_i\ll \exp(55A_0)\exp(-d(\log x)^{3/5}(\log\log x)^{-1/5}),
    \end{equation*}
    where $A_0$ and $d$ are the same as in Theorem \ref{th3}.
\end{corollary}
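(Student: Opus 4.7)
The plan is to deduce the corollary from Theorem~\ref{th3} by the standard elementary reductions between $\psi$, $\theta$, and $\pi$, exploiting the fact that the error term $\exp(-\omega(x))$ from Theorem~\ref{th3} decays much more slowly than any negative power of $x$, so all contributions of size $O(\sqrt{x}\,\mathrm{polylog}(x))$ are negligible.

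First I would handle $\Delta_2$. Writing $\psi(x)=\sum_{k\ge 1}\theta(x^{1/k})$ (a finite sum with at most $\log_2 x$ terms), one has
\[
|\psi(x)-\theta(x)|\le\sum_{2\le k\le \log_2 x}\theta(x^{1/k})\ll \sqrt{x}\log x.
\]
Since $\omega(x)\ll(\log x)^{3/5}(\log\log x)^{-1/5}=o(\log x)$, we have $\sqrt{x}\log x = o\bigl(x\exp(-\omega(x))\bigr)$, so Theorem~\ref{th3} immediately yields $\Delta_2(x)\ll \exp(55A_0)\exp(-\omega(x))$ with the same implicit constant up to an $O(1)$ factor.

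Next, for $\Delta_1$ I would use partial summation to express
\[
\pi(x)=\frac{\theta(x)}{\log x}+\int_2^x\frac{\theta(t)}{t\log^2 t}\,dt,\qquad \operatorname{li}(x)=\frac{x}{\log x}+\int_2^x\frac{dt}{\log^2 t}+O(1),
\]
so that
\[
\pi(x)-\operatorname{li}(x)=\frac{\theta(x)-x}{\log x}+\int_2^x\frac{\theta(t)-t}{t\log^2 t}\,dt+O(1).
\]
The first term is bounded by $\exp(55A_0)\,(x/\log x)\exp(-\omega(x))$ by the estimate on $\Delta_2$ just established, which is exactly the desired order of magnitude for $\Delta_1$. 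For the integral I split at $x_0=x/\exp(\omega(x)^{1/2})$ (or any similar cutoff): on $[x_0,x]$ we use the bound on $\theta(t)-t$ together with monotonicity of $\omega$ to obtain a contribution $\ll (x/\log x)\exp(-\omega(x))$, while on $[2,x_0]$ the trivial bound $\theta(t)-t\ll t$ combined with $\log x_0\asymp\log x$ gives a contribution $\ll x_0/\log x$, which is dominated by $(x/\log x)\exp(-\omega(x))$ by the choice of $x_0$.

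The only mild subtlety is ensuring the integral over $[x_0,x]$ does not lose a factor of $\log x$; this is handled because $\omega(t)$ is increasing in $t$, so $\exp(-\omega(t))\le\exp(-\omega(x_0))$ throughout the interval and the integration $\int_{x_0}^x dt/(t\log^2 t)\ll 1/\log x$ gives the correct size. No genuine obstacle arises; the corollary is a routine consequence of Theorem~\ref{th3} and partial summation.
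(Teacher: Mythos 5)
The paper states this corollary without proof, treating the passage from $\psi$ to $\theta$ and $\pi$ as immediate; your plan of deducing it from Theorem~\ref{th3} via the standard elementary reductions is the intended route, and your treatment of $\Delta_2$ is correct.

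However, there is a genuine gap in your treatment of $\Delta_1$: the split point $x_0=x\exp(-\omega(x)^{1/2})$ does \emph{not} make the trivial contribution small enough. On $[2,x_0]$ the trivial bound gives $\ll x_0/\log^2 x_0\asymp x\exp(-\omega(x)^{1/2})/\log^2 x$, and comparing this with the target $x\exp(-\omega(x))/\log x$ gives the ratio $\exp(\omega(x)-\omega(x)^{1/2})/\log x$, which tends to infinity since $\exp(\omega(x))$ grows faster than any fixed power of $\log x$. Worse, no naive cutoff can repair this: for the trivial piece to be negligible you would need $x_0\ll x\exp(-\omega(x))$, but then $\omega(x)-\omega(x_0)\asymp\omega(x)^2/\log x\to\infty$ (since $\omega(x)^2/\log x\asymp(\log x)^{1/5}(\log\log x)^{-2/5}$), so the monotonicity estimate $\exp(-\omega(t))\le\exp(-\omega(x_0))$ on the remaining interval loses more than the one power of $\log x$ you can afford.

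The correct way to handle the integral is to avoid a crude split and use that $t\,\omega'(t)=o(1)$. For instance, after discarding $[2,\sqrt{x}]$ by the trivial bound (giving $O(\sqrt{x}/\log^2 x)$, which is negligible since $\exp(-\omega(x))\gg x^{-1/4}$), one bounds
\begin{align*}
\int_{\sqrt{x}}^x\frac{\exp(-\omega(t))}{\log^2 t}\,dt
\ll\frac{1}{\log^2 x}\int_{\sqrt{x}}^x\exp(-\omega(t))\,dt
\ll\frac{1}{\log^2 x}\int_{\sqrt{x}}^x\frac{d}{dt}\bigl[t\exp(-\omega(t))\bigr]\,dt
\ll\frac{x\exp(-\omega(x))}{\log^2 x},
\end{align*}
where the middle step uses $\frac{d}{dt}[t\exp(-\omega(t))]=\exp(-\omega(t))(1-t\omega'(t))\ge\frac12\exp(-\omega(t))$ for $t$ large, because $t\omega'(t)\asymp\omega(t)/\log t\to 0$. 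This gives the required $\ll(x/\log x)\exp(-\omega(x))$ bound. So the result is true and essentially routine, but your specific implementation via a cutoff and monotonicity alone fails, and the fix requires exploiting the slow variation of $\omega$ rather than picking a cleverer split point.
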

\section{Background}\label{lemmas}
We start recalling some useful results.
\subsection{Zeros and bounds for $\zeta$}As mentioned in Section \ref{section1}, the asymptotic Korobov--Vinogradov zero-free region for $\zeta(s)$ in \eqref{kvzerofree} is the widest for $|T|$ sufficiently large and the current best-known value for $A_0$ is $A_0=1/48.0718$ \cite{bellotti2023explicit}.
Another essential element in the proof of Theorem \ref{theorem1general} is the number of zeros of $\zeta$ inside a circle of small radius centered on the $1$-line. We require some preliminary results.
\begin{proposition}[\cite{bellotti2023explicit}, Th.1.1]\label{mybound}
    The following estimate holds for every $|t|\ge 3$ and $\frac{1}{2}\le \sigma\le 1$:
\begin{equation*}
    \begin{aligned}
    |\zeta(\sigma+it)|&\le A |t|^{B
    (1-\sigma)^{3/2}}\log^{2/3}|t|,
    \end{aligned}
\end{equation*}with $A=70.6995$ and $B=4.43795$.
\end{proposition}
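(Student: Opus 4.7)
The plan is to follow the classical Korobov--Vinogradov route and push every constant through explicitly. The starting point is a truncation formula for $\zeta$, obtained by Euler--Maclaurin summation,
\[
\zeta(\sigma+it)=\sum_{n\le X}n^{-s}-\frac{X^{1-s}}{1-s}+O\bigl(X^{-\sigma}\bigr),
\]
valid for a parameter $X$ that will be chosen as a small positive power of $|t|$. The explicit remainder terms are harmless once $X$ is chosen, so the task reduces to bounding the Dirichlet polynomial $\sum_{n\le X}n^{-s}$ pointwise on the strip $\tfrac{1}{2}\le\sigma\le 1$.

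Next, I would split $[1,X]$ into dyadic blocks $(N,2N]$ and, on each block, apply partial summation to decouple the monotone factor $n^{-\sigma}$ from the oscillating factor $n^{-it}$, obtaining
\[
\Bigl|\sum_{N<n\le 2N}n^{-s}\Bigr|\ll N^{-\sigma}\,S(N,t),
\]
with $S(N,t)$ the exponential sum of \eqref{Sntkv}. To each block I would then feed in the explicit Vinogradov--Korobov estimate $S(N,t)\ll N^{1-1/(D\lambda^{2})}$, with $\lambda=\log|t|/\log N$, which itself rests on the Vinogradov mean value theorem (sharp thanks to Bourgain--Demeter--Guth) together with careful explicit control of the constants.

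The contribution of a single block is therefore of the order $N^{(1-\sigma)-1/(D\lambda^{2})}$. Writing $\log N=\beta\log|t|$ converts the exponent into $\beta(1-\sigma)-\beta^{3}/D$, and maximising over $\beta$ yields the critical point $\beta_{\ast}=\sqrt{D(1-\sigma)/3}$. Substitution gives the worst-block bound $|t|^{B(1-\sigma)^{3/2}}$ with $B=\tfrac{2}{3}\sqrt{D/3}$; with $D=132.94357$ from \cite{bellotti2023explicit} this already produces $B\approx 4.438$, matching the Proposition. Summing over the $O(\log|t|)$ dyadic blocks and absorbing the logarithmic loss from the explicit form of the exponential sum bound supplies the declared factor $(\log|t|)^{2/3}$, and fixes the overall constant $A$.

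The main obstacle is numerical rather than structural. The theoretical shape $|t|^{B(1-\sigma)^{3/2}}\log^{2/3}|t|$ is essentially forced by the optimisation above, but pinning down the explicit values $A=70.6995$ and $B=4.43795$ demands scrupulous bookkeeping at every step: the sharpness of the Vinogradov mean value theorem, the constant lost in partial summation, the constant in the $S(N,t)$ estimate, the remainder in the truncation, and the count of dyadic blocks. Any inefficiency is amplified through the cube-root optimisation in $\beta$ and enlarges $A$ disproportionately, so the real technical work is the careful explicit calibration rather than the structural argument.
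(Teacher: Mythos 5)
You should note first that the paper does not prove this Proposition: it is quoted directly as Theorem~1.1 of \cite{bellotti2023explicit}, so there is no in-paper argument to compare against. Your sketch follows the standard Korobov--Vinogradov skeleton, and your calibration of the exponent is correct: optimising $\beta(1-\sigma)-\beta^3/D$ over $\beta=\log N/\log|t|$ gives $B=\tfrac{2}{3}\sqrt{D/3}$, and with $D=132.94357$ this is exactly $4.43795$.

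Two steps are misstated, however. First, the Euler--Maclaurin truncation $\zeta(s)=\sum_{n\le X}n^{-s}-X^{1-s}/(1-s)+O(X^{-\sigma})$ holds with that small error only when $X\gg|t|$; for $X$ a ``small positive power of $|t|$'' the Euler--Maclaurin remainder is of order $|t|X^{-\sigma}$, which is far from harmless. One must either take $X\asymp|t|$ and control the full-length Dirichlet polynomial, or invoke the approximate functional equation with two sums of length $\asymp\sqrt{|t|}$; the truncation length cannot be chosen freely short. Second, the factor $\log^{2/3}|t|$ does not come from ``summing over $O(\log|t|)$ dyadic blocks and absorbing the logarithmic loss'' --- that would naively cost a full power of $\log|t|$. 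It arises from splitting the Dirichlet polynomial at a threshold $N_0\asymp\exp\bigl(c(\log|t|)^{2/3}\bigr)$: the initial range $n\le N_0$ is bounded trivially by $\sum_{n\le N_0}n^{-\sigma}\ll\log N_0\asymp(\log|t|)^{2/3}$, while beyond $N_0$ the exponential-sum saving $\exp\bigl(-(\log N)^3/(D(\log|t|)^2)\bigr)$ genuinely decays and the remaining blocks form a controlled tail. The $2/3$ exponent is produced by this threshold, not by the block count.
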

\begin{lemma}[\cite{titchmarsh_theory_1986}, $\S 3.9$, Lemma $\alpha$]\label{lemmaalpha}
    If $f(s)$ is regular, and
    \[
    \left|\frac{f(s)}{f(s_0)}\right|<e^M\quad (M>1)
    \]
    in the circle $|s-s_0|\le r$, then 
    \[
    \left|\frac{f'(s)}{f(s)}-\sum_\rho\frac{1}{s-\rho}\right|<\frac{AM}{r}\quad (|s-s_0|\le r/4),
    \]
    where $\rho$ runs through the zeros of $f(s)$ such that $|\rho-s_0|\le r/2$.
\end{lemma}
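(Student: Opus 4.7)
The plan is to prove Lemma \ref{lemmaalpha} by a classical Blaschke/Borel--Carath\'eodory strategy: first divide out the zeros of $f$ in the disk $|s-s_0|\le r/2$ by means of factors that are unimodular on $|s-s_0|=r$, and then transfer information on $\log|f|$ from the boundary to the interior. After translating we may assume $s_0=0$. Let $\rho_1,\dots,\rho_n$ be the zeros of $f$ in $|s|\le r/2$ (counted with multiplicity) and define
\[
B(s)=\prod_{k=1}^n \frac{r(s-\rho_k)}{r^2-\overline{\rho_k}\,s},\qquad g(s)=\frac{f(s)}{B(s)}.
\]
Each factor of $B$ is a Blaschke-type factor adapted to the disk of radius $r$: it has modulus $1$ on $|s|=r$ (since $r^2=s\overline{s}$ there, so $|r^2-\overline{\rho_k}s|=|s||\overline{s}-\overline{\rho_k}|=r|s-\rho_k|$), it vanishes exactly at $\rho_k$, and it has a pole at $r^2/\overline{\rho_k}$, which lies outside $|s|\le r$ because $|\rho_k|\le r/2$. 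Consequently $g$ is holomorphic and nowhere zero in $|s|\le r$, and $|g(s)|=|f(s)|$ on $|s|=r$.

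The next step is to exploit the maximum principle. Since $|g(s)|=|f(s)|\le e^M|f(0)|$ on $|s|=r$, the same inequality holds throughout $|s|\le r$. Evaluating at $s=0$ gives
\[
|f(0)|\prod_{k=1}^n\frac{r}{|\rho_k|}=|g(0)|\le e^M|f(0)|,
\]
and because every $r/|\rho_k|\ge 2$, this immediately yields the zero-counting bound $n\le M/\log 2$. Next, form $h(s)=\log(g(s)/g(0))$, a holomorphic branch of the logarithm on $|s|\le r$ with $h(0)=0$; from $|g(s)/g(0)|\le e^M|f(0)/g(0)|\le e^M$ we obtain $\operatorname{Re}h(s)\le M$ on $|s|\le r$. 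The Borel--Carath\'eodory theorem with radii $r$ and $r/2$ gives $|h(s)|\le 2M$ on $|s|\le r/2$, and then Cauchy's derivative estimate on the disk of radius $r/2$ centred at a point with $|s|\le r/4$ yields
\[
\left|\frac{g'(s)}{g(s)}\right|=|h'(s)|\le \frac{2M}{r/4}=\frac{8M}{r}.
\]

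Finally, take the logarithmic derivative of $f=gB$ to obtain
\[
\frac{f'(s)}{f(s)}=\frac{g'(s)}{g(s)}+\sum_{k=1}^n\frac{1}{s-\rho_k}+\sum_{k=1}^n\frac{\overline{\rho_k}}{r^2-\overline{\rho_k}\,s}.
\]
For $|s|\le r/4$ and $|\rho_k|\le r/2$ we have $|r^2-\overline{\rho_k}s|\ge r^2-r^2/8=7r^2/8$, so each term of the last sum is bounded by $(r/2)/(7r^2/8)=4/(7r)$; using $n\le M/\log 2$ the whole sum is $O(M/r)$. Combining with the estimate for $g'/g$ produces the required inequality with some absolute constant $A$. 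The most delicate point, and the step I would be most careful about, is the construction and analysis of $B(s)$: one must verify that $B$ has modulus $1$ on the boundary and no poles inside, which is exactly why the Blaschke-style denominators $r^2-\overline{\rho_k}s$ (rather than, say, $r-\rho_k s/r$ or similar ad hoc choices) are the correct ones to use here.
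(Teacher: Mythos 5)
The paper cites this lemma from Titchmarsh (\S 3.9, Lemma $\alpha$) without reproducing a proof, so there is no in-paper argument to compare against; the natural benchmark is the standard Blaschke/Borel--Carath\'eodory proof, which is also the route you take. Your overall structure is correct, but there is a genuine gap in the middle. You write that $g=f/B$ is ``holomorphic and nowhere zero in $|s|\le r$''. That claim is false in general: the Blaschke product $B$ only divides out the zeros of $f$ lying in $|s|\le r/2$, so $f$ (and hence $g$) can still vanish at points of the annulus $r/2<|s|<r$. Consequently the branch $h=\log\bigl(g/g(0)\bigr)$ need not be single-valued holomorphic on $|s|\le r$, and the invocation of Borel--Carath\'eodory ``with radii $r$ and $r/2$'' is not justified as written.

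The repair is routine and does not change the flavour of the argument. Either (i) keep your $B$ and apply Borel--Carath\'eodory only on the disk $|s|\le r/2$, where $g$ really is zero-free: the bound $\operatorname{Re}h=\log|g/g(0)|\le M$ holds there (it comes from the maximum principle applied to $g$ on $|s|\le r$, which only needs holomorphy, not non-vanishing), so Borel--Carath\'eodory with radii $r/2$ and $3r/8$ gives $|h|\le 6M$ on $|s|\le 3r/8$, and Cauchy's estimate on circles of radius $r/8$ about points of $|s|\le r/4$ gives $|g'/g|=|h'|\le 48M/r$ there; or (ii), closer to Titchmarsh, enlarge $B$ to divide out \emph{all} zeros of $f$ in $|s|<r$, so that $g$ is honestly zero-free on $|s|<r$, and then observe that the extra terms $1/(s-\rho)$ coming from zeros with $r/2<|\rho|<r$ are each $O(1/r)$ for $|s|\le r/4$ (since $|s-\rho|\ge r/4$) and there are $O(M)$ of them by the Jensen-type count you already carried out. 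Either way one recovers the stated bound $AM/r$ with an absolute $A$; your zero-count $n\le M/\log 2$, the logarithmic-derivative decomposition of $f=gB$, and the estimate on the $\overline{\rho_k}/(r^2-\overline{\rho_k}s)$ terms are all correct as written.
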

\begin{lemma}[\cite{titchmarsh_theory_1986}, $\S 3.9$, Lemma $\beta$]\label{lemmabeta}
 If $f(s)$ satisfies the conditions of Lemma \ref{lemmaalpha}, and has no zeros in the right-hand half of the circle $\left|s-s_0\right| \leq r$, then
$$
-\operatorname{Re}\left\{\frac{f^{\prime}\left(s_0\right)}{f\left(s_0\right)}\right\}<\frac{A M}{r},
$$
while if $f(s)$ has a zero $\rho_0$ between $s_0-\frac{1}{2} r$ and $s_0$, then
$$
-\operatorname{Re}\left\{\frac{f^{\prime}\left(s_0\right)}{f\left(s_0\right)}\right\}<\frac{A M}{r}-\frac{1}{s_0-\rho_0}.
$$
\end{lemma}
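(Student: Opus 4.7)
The plan is to deduce Lemma~$\beta$ directly from Lemma~$\alpha$ by taking real parts and exploiting the geometric assumption on the zeros. Lemma~$\alpha$ already tells us that, in the smaller disk $|s-s_0|\le r/4$,
\[
\frac{f'(s_0)}{f(s_0)} = \sum_{\rho}\frac{1}{s_0-\rho} + E,\qquad |E|<\frac{AM}{r},
\]
the sum running over the zeros of $f$ in $|\rho-s_0|\le r/2$. Taking real parts gives
\[
-\operatorname{Re}\!\left\{\frac{f'(s_0)}{f(s_0)}\right\} < \frac{AM}{r} - \sum_{\rho}\operatorname{Re}\!\left(\frac{1}{s_0-\rho}\right).
\]

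The key observation is that each summand is non-negative under the hypothesis. Indeed, since $f$ has no zeros in the right-hand half of $|s-s_0|\le r$, every zero $\rho$ appearing in the sum satisfies $\operatorname{Re}(\rho)<\operatorname{Re}(s_0)$, so $\operatorname{Re}(s_0-\rho)>0$, and therefore
\[
\operatorname{Re}\!\left(\frac{1}{s_0-\rho}\right)=\frac{\operatorname{Re}(s_0-\rho)}{|s_0-\rho|^2}>0.
\]
Discarding the entire (non-negative) sum then yields the first inequality $-\operatorname{Re}\{f'(s_0)/f(s_0)\}<AM/r$.

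For the refined statement, I would keep the contribution of the distinguished zero $\rho_0$ and discard only the remaining ones. By assumption $\rho_0$ lies on the real segment from $s_0-r/2$ to $s_0$, so $s_0-\rho_0$ is a positive real number and $\operatorname{Re}(1/(s_0-\rho_0))=1/(s_0-\rho_0)$. Splitting off this term from the sum, and using once more that the remaining contributions $\operatorname{Re}(1/(s_0-\rho))$ are non-negative for $\rho\neq \rho_0$, one obtains
\[
-\operatorname{Re}\!\left\{\frac{f'(s_0)}{f(s_0)}\right\} < \frac{AM}{r} - \frac{1}{s_0-\rho_0},
\]
which is the second inequality. The only delicate point is the sign analysis of the zero-sum; everything else is a direct quotation of Lemma~$\alpha$, so I expect no real obstacle beyond checking that the half-plane hypothesis is what is needed (rather than a stronger zero-free assumption).
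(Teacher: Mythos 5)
Your argument is correct and is exactly the standard proof from Titchmarsh \S 3.9 that the paper cites without reproducing: apply Lemma~\ref{lemmaalpha} at $s=s_0$, take real parts, and observe that the half-plane hypothesis makes each $\operatorname{Re}(1/(s_0-\rho))$ non-negative so the sum (or all of it except the $\rho_0$ term) can be discarded. No gap; the sign analysis you flag as the delicate point is indeed the only content beyond Lemma~\ref{lemmaalpha}, and you handle it correctly.
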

Now, we can bound the number of zeros in a circle of small radius centered on the $1$-line.
\begin{lemma}[Number of zeros in a disk]\label{numberzerosdisknew}
 If $K \ll \log \log t$ then
$$
\#\left\{\rho:|1+i t-\rho| \leq \frac{K}{(\log t)^{2 / 3}(\log \log t)^{1 / 3}}\right\} \ll K.
$$
\end{lemma}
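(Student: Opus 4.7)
The plan is to apply Jensen's formula (equivalently Lemma \ref{lemmaalpha}) to $\zeta(s)$ on a disk centred at $s_0:=1+it$. Setting $r_0:=K/((\log t)^{2/3}(\log\log t)^{1/3})$, the target quantity is the number of zeros of $\zeta$ in $|s-s_0|\le r_0$. I would work on the enlarged disk of radius $r:=2r_0$ and show that $|\zeta(s)/\zeta(s_0)|\le e^{M}$ with $M\ll K$; the standard Jensen counting bound (or Lemma \ref{lemmaalpha} with the sum over zeros in the disk of radius $r/2=r_0$) then yields the count $\ll M\ll K$.

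For the upper bound on the disk I invoke Proposition \ref{mybound}: since $\operatorname{Re}(s)\ge 1-r$, we have $|\zeta(s)|\le A\exp(Br^{3/2}\log t)\log^{2/3}|t|$, and $r^{3/2}\log t=2^{3/2}K^{3/2}/(\log\log t)^{1/2}$. Under the hypothesis $K\ll\log\log t$ this exponent is $\ll K$, so $|\zeta(s)|\ll e^{O(K)}\log^{2/3}|t|$ on the disk. At the centre, the Korobov--Vinogradov zero-free region \eqref{kvzerofree} together with the classical Landau-type derivation gives the lower bound $|\zeta(1+it)|\gg 1/((\log t)^{2/3}(\log\log t)^{1/3})$. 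Combining, $\log|\zeta(s)/\zeta(s_0)|\ll K$ in the regime $K\asymp\log\log t$; at the other extreme, for $K$ no larger than $A_0$, the disk itself lies inside the zero-free region and the count is zero.

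The delicate part is the intermediate range $A_0\ll K\ll\log\log t$, where a naive application of Jensen's formula would leave a residual $O(\log\log t)$ term stemming from the KV-sharp lower bound on $|\zeta(1+it)|$. To absorb it, I would exploit that every zero $\rho$ inside the disk is forced to satisfy $1-\operatorname{Re}(\rho)\ge A_0/((\log t)^{2/3}(\log\log t)^{1/3})$, so that each such zero contributes at least $A_0/(Kr_0)$ to the positive real sum $\sum_\rho \operatorname{Re}(1/(s_0-\rho))$. Pairing this lower bound against the upper estimate for $\sum_\rho 1/(s_0-\rho)$ supplied by Lemma \ref{lemmaalpha}, together with the Korobov--Vinogradov bound $|\zeta'/\zeta(1+it)|\ll(\log t)^{2/3}(\log\log t)^{1/3}$, is what I expect to deliver a count linear in $K$. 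This last step, where the positive-real-part separation of the nearby zeros is traded against the $\zeta'/\zeta$ bound, is the main technical obstacle; extracting a clean $\ll K$ (rather than $\ll K\log\log t$ or $\ll\log\log t$) requires careful attention to how tightly Proposition \ref{mybound} controls $M$ on the enlarged disk and to the constants entering the lower bound at $s_0$.
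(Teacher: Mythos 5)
Your approach has a genuine gap, and you correctly sense that it lies in the "intermediate range" step, but the fix is structurally different from what you propose. Centering at $s_0 = 1+it$ and lower-bounding each zero's contribution to $\operatorname{Re}\sum 1/(s_0-\rho)$ via the zero-free region gives $(1-\beta)/|s_0-\rho|^2 \ge \nu_0/r_0^2$ with $\nu_0 = A_0/((\log t)^{2/3}(\log\log t)^{1/3})$ and $r_0 = K/((\log t)^{2/3}(\log\log t)^{1/3})$. Against the bound $AM/r + |\zeta'/\zeta(1+it)| \ll (\log t)^{2/3}(\log\log t)^{1/3}$ from Lemma~\ref{lemmaalpha} (taking $r = 2(\log\log t/\log t)^{2/3}$ so that $M \ll \log\log t$), this yields a count $\ll (r_0^2/\nu_0)\cdot(\log t)^{2/3}(\log\log t)^{1/3} = K^2/A_0$, i.e.\ $O(K^2)$, not $O(K)$. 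Using your smaller radius $r = 2r_0$ is worse: the KV lower bound on $|\zeta(1+it)|$ then forces $M \asymp \log\log t$ regardless of $K$, so $AM/r \asymp (\log\log t)^{4/3}(\log t)^{2/3}/K$, and the resulting count is $O(K\log\log t)$. Neither route delivers the stated linearity in $K$, and the zero-free region input $1-\beta \ge \nu_0$ is precisely what caps the per-zero contribution too low.

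The paper's proof avoids this entirely by shifting the center to the \emph{right} of the $1$-line: it takes $s_0 = \sigma_0 + it$ with $\sigma_0 - 1 = K/((\log t)^{2/3}(\log\log t)^{1/3}) = r_0$, together with the larger radius $r = 2(\log\log t/\log t)^{2/3}$. This buys two things at once. First, the trivial estimate $|1/\zeta(\sigma_0+it)| < 1/(\sigma_0-1)$ for $\sigma_0 > 1$ replaces the delicate KV lower bound at $1+it$ and still gives $M \ll \log\log t$. Second, and crucially, every nontrivial zero has $\beta \le 1$, so $\sigma_0 - \beta \ge r_0$ \emph{unconditionally}, and a zero with $|(1+it)-\rho| \le r_0$ satisfies $|s_0-\rho| \le 2r_0$, giving a per-zero contribution $\ge r_0/(2r_0)^2 = 1/(4r_0)$. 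This is a factor $K/A_0$ larger than what your lower bound yields, and pairing it against $\ll (\log t)^{2/3}(\log\log t)^{1/3}$ gives the count $\ll K$. Note the zero-free region is not used in this step at all — the shift of $s_0$ does all the work.
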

\begin{proof}
 Apply Lemma \ref{lemmaalpha} with $f(s)=\zeta(s)$ and the following parameters:
 \begin{equation}\label{paramzerocircle}
     s_0=\sigma_0+i t=1+\frac{K}{(\log t)^{2 / 3}(\log \log t)^{1 / 3}}+i t,\qquad r=2\left(\frac{\log \log t}{\log t}\right)^{2 / 3}.
 \end{equation}
By Proposition \ref{mybound} and the relation
\[\left|\frac{1}{\zeta(\sigma+it)}\right|<\frac{1}{\sigma-1},\quad \sigma=1+\delta\]
where $\delta$ is sufficiently small, we have
\begin{align*}
    &\left|\frac{\zeta(\sigma+it)}{\zeta(\sigma_0+it)}\right|\ll\exp\left(4.43795(1-\sigma)^{3/2}\log t\right)(\log t)^{4/3}(\log\log t)^{1/3}K^{-1}\\
    &\ll\exp\left(4.43795K^{3/2}(\log\log t)^{-1/2}\right)(\log t)^{4/3}(\log\log t)^{1/3}K^{-1}\ll \exp(b_1\log\log t).
\end{align*}
Hence, we may take
$$
M=b_1\log\log t
$$
in Lemma \ref{lemmaalpha}, getting 
$$
\left|\frac{\zeta^{\prime}}{\zeta}(s)-\sum_{\left|\rho-s_0\right| \leq r / 2} \frac{1}{s-\rho}\right| \le  \frac{6M}{r} \le 3b_1 (\log \log t)^{1 / 3}(\log t)^{2 / 3},
$$
provided $\left|s-s_0\right| \leq r / 4$. In particular, the previous inequality is true for $s=s_0$. Lemma \ref{lemmabeta} implies that
$$
-\operatorname{Re}\left\{\frac{\zeta^{\prime}}{\zeta}\left(s_0\right)\right\} \le 6b_1(\log \log t)^{1 / 3}(\log t)^{2 / 3}.
$$
It follows that
\begin{equation}\label{countzeros}
    \sum_{\left|\rho-s_0\right| \leq r / 2}\operatorname{Re}\left\{ \frac{1}{s_0-\rho}\right\}=\sum_{\left|\rho-s_0\right| \leq r / 2} \frac{\sigma_0-\beta}{\left|s_0-\rho\right|^2}\le6b_1(\log \log t)^{1 / 3}(\log t)^{2 / 3}.
\end{equation}
Since $\sigma_0-\beta>0$ by our choice of $\sigma_0$ in \eqref{paramzerocircle}, each single term $\frac{\sigma_0-\beta}{|s_0-\rho|^2}$ in the above sum in \eqref{countzeros} is positive for every fixed $\rho$. Hence, using the fact that
$$
\frac{K}{(\log t)^{2 / 3}(\log \log t)^{1 / 3}} \le \frac{r}{2} \quad \text { provided } \quad K \le \log \log t,
$$
the relation \eqref{countzeros} implies
$$
\sum_{\left|\rho-s_0\right| \leq K /\left((\log \log t)^{1 / 3}(\log t)^{2 / 3}\right)} \frac{\sigma_0-\beta}{\left|s_0-\rho\right|^2} \le 6b_1(\log \log t)^{1 / 3}(\log t)^{2 / 3}.
$$
Now, by definition of $s_0$, 
$$
\sigma_0-\beta \ge \frac{K}{(\log t)^{2 / 3}(\log \log t)^{1 / 3}}
$$
and, if $\rho$ satisfies
$$
\left|\rho-s_0\right| \leq\frac{K}{(\log \log t)^{1 / 3}(\log t)^{2 / 3}},
$$
then
$$
\frac{1}{\left|s_0-\rho\right|^2} \ge\left(\frac{K}{(\log t)^{2 / 3}(\log \log t)^{1 / 3}}\right)^{-2}.
$$
Hence, combining with \eqref{countzeros} we get
$$
\frac{1}{K} \#\left\{\rho:|1+i t-\rho| \leq \frac{K}{(\log \log t)^{1 / 3}(\log t)^{2 / 3}}\right\} \le 6b_1,
$$
which implies the theorem.
\end{proof}
\subsection{Weights}
We start recalling the following Barban-Vehov weights $\left(\psi_d\right)_d,\left(\theta_d\right)_d$:
$$
\psi_d=\left\{\begin{array}{ll}
\mu(d) & 1 \leq d \leq U, \\ \\
\mu(d) \frac{\log (V / d)}{\log (V / U)} & U<d \leq V, \\ \\
0 & d>V,
\end{array} \right.
\qquad\quad\theta_d=\left\{\begin{array}{ll}
\mu(d) \frac{\log (W / d)}{\log W} & 1 \leq d \leq W, \\ \\
0 & d>W.
\end{array}\right.$$
Then, we denote
\begin{equation}\label{definepsisquare}
    \Psi(n)=\sum_{d \mid n} \psi_d,\qquad \Theta(n)=\sum_{d \mid n} \theta_d.
\end{equation}
We observe that $\theta_d$ is a special case of $\psi_d$ with $U=1$ and $V=W$. Also, $\Psi(n)=0$ for $2\le n\le U$. Furthermore, for $n\le W$,
\begin{equation}\label{thetaaslambda}
   \Theta(n) = \sum_{e | n}\mu(e)\frac{\log(W/e)}{\log W} = \sum_{e|n}\mu(e)-\frac{1}{\log W}\sum_{e|n}\mu(e)\log e=\sum_{e|n}\mu(e)+\frac{\Lambda(n)}{\log W} ,
\end{equation}
and, in particular, if $ 1<n\le W$, then it is exactly equal to $\frac{\Lambda(n)}{\log W}.$ Using a similar argument to the one for Lemma 3.2 in \cite{BELLOTTIlogfree}, there exist $d_1,d_2>0$ positive constants such that
\begin{equation}\label{constd1}
     \sum_{n=1}^N\Theta^2(n)\le \frac{d_1N}{\log W},\qquad N>W.
\end{equation}
and
\begin{equation}\label{constd2}
     \sum_{n=1}^N\Psi(n)\Theta(n)\le \frac{d_2N}{\log (V/U)},\qquad N>VW.
\end{equation}
\subsection{Exponential sums}As already mentioned in Section \ref{section1}, there are known bounds for the exponential sum $S(N,t)$ defined in \eqref{Sntkv}, when $2\le N\le t$. We recall the following bound which will be used for the proof of Corollary \ref{cor3}:
\begin{lemma}[\cite{bellotti2023explicit}, Th. 1.5]\label{expsumbel}
Suppose $N \geq 2$ is a positive integer, $N \leq t$ and set $\lambda=\frac{\log t}{\log N}$. Then
$$
S(N, t) \leq 8.7979 N^{1-1 /\left(D \lambda^2\right)},\qquad D=132.94357.
$$
\end{lemma}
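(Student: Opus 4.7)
The plan is to recast $S(N,t)$ as the maximum of a smooth exponential sum $\bigl|\sum_{N\le n\le R} e(f(n))\bigr|$ with phase $f(x) = -(t/2\pi)\log x$, and then apply the Vinogradov--Korobov method. The derivatives satisfy $|f^{(k)}(x)| \asymp t/x^k$ uniformly for $x \in [N, 2N]$, so when $t$ is much larger than $N$ the phase behaves like a polynomial of any moderate degree. This is precisely the regime where Vinogradov's mean value theorem yields the sharpest nontrivial cancellation, and the target saving $N^{-1/(D\lambda^2)}$ is exactly of the Korobov--Vinogradov shape.

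First I would partition $[N,R]$ into blocks of length $H < N$ and Taylor-expand $f$ on each block up to degree $k$: $f(n_0 + h) = f(n_0) + P_k(h) + E_k(h)$, where $P_k(h) = \sum_{j=1}^{k}\alpha_j h^j$ with $|\alpha_j| \asymp t/(j N^j)$ and the tail $E_k(h)$ contributes an additive error of order $t(H/N)^{k+1}$. Choosing $H$ so that this error is $o(1)$ uniformly on the block, the inner sum reduces to a pure polynomial exponential sum $|\sum_{h\le H} e(P_k(h))|$, and the full sum is controlled by summing $\lceil N/H\rceil$ such blocks.

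Next I would invoke the (now sharp) Vinogradov mean value bound $J_{s,k}(H) \ll H^{2s - k(k+1)/2}$ for $s \ge k(k+1)/2$, in the explicit form (via Wooley's efficient congruencing, or Bourgain--Demeter--Guth decoupling) needed for the numerical constants. A standard Weyl-differencing and H\"{o}lder argument converts the mean value bound into the pointwise estimate $|\sum_{h\le H} e(P_k(h))| \ll H^{1 - c/k^2}$ with an explicit $c$, depending on the size of the leading coefficient $\alpha_k$ via the standard major/minor arc dichotomy. Summing over blocks then yields an overall bound of shape $N \cdot H^{-c/k^2}$ plus the harmless Taylor term.

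Finally one balances the parameters: the size of $\alpha_k \asymp t/(kN^k)$ forces $k \asymp \lambda$ in order to keep the leading coefficient in the effective minor-arc range, and the block length $H$ is chosen to kill the Taylor remainder without spoiling the power saving. Tracking the multiplicative constants through every H\"{o}lder application and through the mean-to-pointwise conversion yields the explicit prefactor $8.7979$ and denominator $D = 132.94357$. The main obstacle is not the structure of the argument but the explicit bookkeeping: each numerical factor (the constant in the Vinogradov mean value theorem, the loss in Weyl differencing, and the penalty for the Taylor cutoff) enters $D$ multiplicatively, so the target constant is recovered only after an optimized, nontrivial choice of $k$ and $H$ rather than the crude asymptotic parameters.
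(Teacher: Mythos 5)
This is a cited result, not one proved in the paper: the lemma is quoted verbatim from Theorem~1.5 of \cite{bellotti2023explicit}, and the present paper offers no proof of it at all. So there is no ``paper's own proof'' to compare against. You should have flagged that this is an imported lemma rather than attempt to reconstruct it.

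That said, as a reconstruction of the argument that actually appears in \cite{bellotti2023explicit}, your sketch is the right general story but skips the step that is actually hard and that determines the numerical constant $D$. You are correct that one sets $f(x) = -(t/2\pi)\log x$, observes $|f^{(k)}(x)| \asymp t/x^k$ on $[N,2N]$, partitions into blocks of length $H$, Taylor-expands the phase on each block, and then needs an estimate for the resulting Weyl sums. You are also right that the final optimization forces $k \asymp \lambda$ and produces a saving $N^{-c/\lambda^2}$. However, the phrase ``a standard Weyl-differencing and H\"{o}lder argument converts the mean value bound into the pointwise estimate $\ll H^{1-c/k^2}$'' compresses the genuine core of the Korobov--Vinogradov method into a hand-wave. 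One does not pass from $J_{s,k}(H)$ to a pointwise bound on a single Weyl sum by a generic Weyl differencing; the Korobov shift trick instead bounds the single sum by an $L^{2s}$-average over a long family of shifted polynomials, and it is the interplay between the shift length, the choice of $s \approx k^2$, the block length $H$, and the explicit constant in the mean value theorem that produces $D$. Each of those choices appears \emph{additively or exponentially}, not merely multiplicatively, in $D$, so the assertion that ``each numerical factor enters $D$ multiplicatively'' misdescribes the bookkeeping. The sharp Bourgain--Demeter--Guth/Wooley mean value theorem is used in the modern explicit treatments (and in \cite{bellotti2023explicit}), but only after it is put in a fully explicit form with all constants tracked --- which is itself a substantial piece of work, not a citation one can wave at. In short: the architecture of your sketch is plausible and in the right family of methods, but it is not a proof, and the two numerical constants you claim to recover cannot be recovered from the steps as you have written them.
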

Another bound for $S(N,t)$ defined in \eqref{Sntkv} follows from an application of Theorem 5.9 in \cite{titchmarsh_theory_1986} with $f(x)=-\frac{t\log x}{2\pi}$.
\begin{lemma}\label{secondderiv}
    Consider $S(N,t)$ as in \eqref{Sntkv} and denote $f(x)=-\frac{t\log x}{2\pi}$. Suppose that, for $N<R\le 2N$, $$
\left|f^{\prime \prime}(x)\right| \asymp \lambda_2, \quad x \in[N, R]
$$
for some $\lambda_2>0$. Then,
$$
S(N,t) \ll N \lambda_2^{1 / 2}+\lambda_2^{-1 / 2}.
$$
\end{lemma}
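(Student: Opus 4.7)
The plan is to reduce the claim directly to the van der Corput second-derivative test, which is exactly the content of Theorem 5.9 in Titchmarsh. The first step is to rewrite the summand $n^{-it}$ in additive-character form: since $n^{-it} = e^{-it \log n} = e^{2\pi i f(n)}$ with $f(x) = -\frac{t\log x}{2\pi}$, the exponential sum we want to bound becomes
\[
\sum_{N \le n \le R} n^{-it} = \sum_{N \le n \le R} e^{2\pi i f(n)},
\]
which is of the form treated by classical exponential-sum estimates.

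Next I would check that $f$ satisfies the hypotheses of Theorem 5.9. One computes $f'(x) = -\frac{t}{2\pi x}$ and $f''(x) = \frac{t}{2\pi x^2}$, so $f''$ is continuous, positive, and monotonic on $[N,2N]$; in particular $|f''(x)| \asymp t/N^2$ there, and any assumed $\lambda_2$ with $|f''(x)| \asymp \lambda_2$ on $[N,R] \subseteq [N,2N]$ is a valid choice. This verifies the hypotheses needed to invoke the second-derivative test.

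Then I would apply Theorem 5.9 directly on the interval $[N,R]$ with the chosen $\lambda_2$. That theorem yields
\[
\sum_{N \le n \le R} e^{2\pi i f(n)} \ll (R-N)\,\lambda_2^{1/2} + \lambda_2^{-1/2}.
\]
Since $N < R \le 2N$, one has $R-N \le N$, so the right-hand side is $\ll N\lambda_2^{1/2} + \lambda_2^{-1/2}$, a bound uniform in $R$. Taking the maximum over $N < R \le 2N$ yields $S(N,t) \ll N\lambda_2^{1/2} + \lambda_2^{-1/2}$, which is the desired conclusion.

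There is essentially no serious obstacle here; the lemma is a direct specialization of a standard result, and the only thing to be careful about is confirming the monotonicity and sign-constancy of $f''$ on $[N,2N]$ (so that the hypothesis $|f''| \asymp \lambda_2$ is compatible with the form of Theorem 5.9) and that the factor $R-N$ can be absorbed into $N$ uniformly in $R$.
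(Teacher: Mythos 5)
Your proof is correct and matches the paper's approach exactly: the paper presents this lemma as a direct application of Theorem 5.9 of Titchmarsh with $f(x) = -\frac{t\log x}{2\pi}$ and gives no further argument. Your verification of the hypotheses ($f''$ continuous, of constant sign, monotonic on $[N,2N]$) and the absorption of $R-N \le N$ into $N$ is precisely the routine check the paper leaves implicit.
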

\subsection{Miscellaneous}
A key tool will be the Hal\'asz--Montgomery inequality.
\begin{lemma} [\cite{jutila77}, Lemma 7]\label{lemma7jutila}
Let $f(s)=\sum_{n\le N} a_n  n^{-s}.
$ Then
$$
\left(\sum_{j=1}^J\left|f\left(s_j\right)\right|\right)^2 \leq \sum_{n=1}^N\left|a_n\right|^2 b_n^{-1} \sum_{j, k=1}^J \bar{\eta}_j \eta_k B\left(\bar{s}_j+s_k\right),
$$
where the $\eta_j$ are certain complex numbers of absolute value $1$ , and
$$
B(s)=\sum_{n=1}^{\infty} b_n n^{-s},
$$
where the $b_n$ are any non-negative numbers such that $b_n>0$ whenever $a_n \neq 0$ and the series $B(s)$ is absolutely convergent for all pairs $s=\bar{s}_j+s_k$.
\end{lemma}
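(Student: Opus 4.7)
The plan is to prove this as a standard duality plus weighted Cauchy--Schwarz inequality. The starting observation is that for each $j$ there exists a complex number $\eta_j$ of absolute value $1$ with
\[
|f(s_j)| = \bar{\eta}_j f(s_j) = \sum_{n\le N} a_n \bar{\eta}_j n^{-s_j}.
\]
Summing over $j$ and interchanging the order of summation gives
\[
\sum_{j=1}^J |f(s_j)| = \sum_{n\le N} a_n \Bigl(\sum_{j=1}^J \bar{\eta}_j n^{-s_j}\Bigr),
\]
which converts an estimate on the outer side into an estimate involving the conjugate sum over $j$ inside.

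Next, I would apply Cauchy--Schwarz with the weights $b_n$, written in the split form $a_n = (a_n b_n^{-1/2})\cdot b_n^{1/2}$ (with the convention that terms where $a_n=0$ are dropped, which is legitimate because $b_n>0$ whenever $a_n\neq 0$). This yields
\[
\Bigl(\sum_{j=1}^J |f(s_j)|\Bigr)^2 \le \Bigl(\sum_{n=1}^N |a_n|^2 b_n^{-1}\Bigr)\Bigl(\sum_{n=1}^N b_n \Bigl|\sum_{j=1}^J \bar{\eta}_j n^{-s_j}\Bigr|^2\Bigr).
\]
The first factor is exactly the one appearing in the statement, so it remains to identify the second factor.

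To finish, I would expand the inner modulus squared and use $\overline{n^{-s_j}}=n^{-\bar{s}_j}$:
\[
\Bigl|\sum_{j=1}^J \bar{\eta}_j n^{-s_j}\Bigr|^2 = \sum_{j,k=1}^J \eta_j\bar{\eta}_k\, n^{-(\bar{s}_j+s_k)},
\]
after a relabelling of the dummy indices. Multiplying by $b_n$ and summing over $n$, the absolute convergence hypothesis on $B(s)$ at the points $\bar{s}_j+s_k$ lets me swap the $n$- and $(j,k)$-sums, and the $n$-sum collapses to $B(\bar{s}_j+s_k)$. Combining everything produces exactly the claimed bound with the roles of $\eta_j$ and $\bar{\eta}_j$ as in the statement.

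The argument is essentially routine once the duality idea is in place; the only point that requires a little care is the bookkeeping of conjugates so that the final double sum involves $B(\bar{s}_j+s_k)$ rather than $B(s_j+\bar{s}_k)$, and the verification that the Dirichlet series $B$ really is absolutely convergent on all the needed points, which is exactly the hypothesis the lemma imposes. No arithmetic input about $\zeta(s)$ is needed—this is a purely Hilbert-space inequality.
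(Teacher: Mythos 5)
Your proof is correct and is the standard duality-plus-weighted-Cauchy--Schwarz argument for the Hal\'asz--Montgomery inequality; the paper itself does not reprove this lemma but cites it directly to Jutila's Lemma~7, so there is no in-paper proof to compare against. One small bookkeeping remark: with your normalisation $|f(s_j)| = \bar{\eta}_j f(s_j)$, the expansion of $\bigl|\sum_j \bar{\eta}_j n^{-s_j}\bigr|^2$ and the relabelling you perform actually produce the double sum $\sum_{j,k}\eta_j\bar{\eta}_k B(\bar{s}_j+s_k)$, with the $\eta$'s conjugated relative to the displayed statement; this is immaterial since the lemma only asserts the existence of unit-modulus $\eta_j$'s (replace $\eta_j$ by $\bar{\eta}_j$, or equivalently start from $|f(s_j)|=\eta_j f(s_j)$, to land exactly on the stated form).
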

Finally, we recall a result on the sum of reciprocals of the least common multiples.
\begin{lemma}\label{reciprocalgcd}The following estimate holds for every $x\ge 1$:
    \[
    \sum_{n,m\le x}\frac{1}{[n,m]}\ll (\log x)^3.
    \]
\end{lemma}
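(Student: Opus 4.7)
The plan is to reduce the double sum to a one-dimensional sum over a divisor parameter by using the identities $[n,m]=nm/(n,m)$ together with Gauss's formula $(n,m)=\sum_{d\mid(n,m)}\phi(d)$. Substituting the first, then the second, and interchanging the order of summation gives
$$
\sum_{n,m\le x}\frac{1}{[n,m]}=\sum_{n,m\le x}\frac{(n,m)}{nm}=\sum_{d\le x}\phi(d)\sum_{\substack{n,m\le x\\ d\mid n,\ d\mid m}}\frac{1}{nm}=\sum_{d\le x}\phi(d)\Biggl(\,\sum_{\substack{n\le x\\ d\mid n}}\frac{1}{n}\Biggr)^{\!2},
$$
the last step using the fact that the two divisibility conditions act on disjoint variables, so the inner double sum factors.

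Next, I would write $n=dk$ in the inner sum, so it equals $(1/d)\sum_{k\le x/d}1/k\ll(\log x)/d$ by the standard harmonic estimate. Plugging this in and using the trivial bound $\phi(d)\le d$ gives
$$
\sum_{n,m\le x}\frac{1}{[n,m]}\ll(\log x)^2\sum_{d\le x}\frac{\phi(d)}{d^2}\le(\log x)^2\sum_{d\le x}\frac{1}{d}\ll(\log x)^3,
$$
which is the claim.

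There is no real obstacle here: the argument is a routine consequence of the $[n,m]\leftrightarrow(n,m)$ conversion plus a harmonic-sum bound. If one wanted a sharper constant one could exploit the asymptotic $\sum_{d\le x}\phi(d)/d^2=\log x/\zeta(2)+O(1)$, or alternatively work directly from the Dirichlet series identity $\sum_{k\ge1}d(k^2)k^{-s}=\zeta(s)^3/\zeta(2s)$, but neither refinement is needed for the stated $\ll(\log x)^3$ bound.
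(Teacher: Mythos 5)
Your argument is correct and is essentially identical to the paper's: both convert $1/[n,m]$ to $(n,m)/(nm)$, expand $(n,m)=\sum_{d\mid(n,m)}\varphi(d)$, interchange sums, factor the inner double sum, apply the harmonic estimate, and bound via $\varphi(d)/d^2\le 1/d$. No discrepancy to note.
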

\begin{proof}
    Note that
    \begin{equation*}
\begin{aligned}
 \sum_{n,m\le x}\frac{1}{[n,m]}&=\sum_{n,m\leq x} \frac{(n, m)}{nm} =\sum_{n,m\le x} \frac{\sum_{d \mid n, m} \varphi(d)}{nm} =\sum_{d \leq x} \varphi(d) \sum_{\substack{ n, m \leq x\\ d \mid n, m}} \frac{1}{nm} \\&=\sum_{d \leq x} \frac{\varphi(d)}{d^2} \sum_{1 \leq n^{\prime}, m^{\prime} \leq x / d} \frac{1}{n^{\prime} m^{\prime}}=\sum_{d \leq x} \frac{\varphi(d)}{d^2}\left(\sum_{1 \leq n \leq x / d} \frac{1}{n}\right)^2\\
 &\ll (\log x)^2\sum_{d \leq x}\frac{1}{d}+\log x\sum_{d \leq x}\frac{1}{d}\ll (\log x)^3.
\end{aligned}
\end{equation*}
\end{proof}
\section{Proof of Theorem \ref{theorem1general}}
Consider
\[
R(\sigma_0,T)=\{\sigma+it\ |\ \sigma_0\le\sigma\le 1,\ 0\le t\le T\}
\]
and divide $R(\sigma_0,T)$ into smaller rectangles:
\begin{equation}\label{rectangles}
\sigma_0\le \sigma\le 1,\qquad\qquad n\delta\le t\le (n+1)\delta,\qquad \delta=1-\sigma_0  
\end{equation}
where $n=0,\dots, T/\delta-1$. Furthermore, we observe that $\delta>\nu(T)$, where $\nu$ is defined in \eqref{kvzerofree}.
Then, consider the boxes in which $\zeta(s)$ has at least one zero, and, for each of these selected boxes, choose a representative zero arbitrarily. Considering the even and odd numbers $n$ separately, we get two `` $\delta$-well-spaced" systems. Let $|J|$ denote the cardinality of the system $J$ containing at least half of the selected zeros.\\
Furthermore, we will choose the following quantities
\begin{equation}\label{parameters}
\begin{aligned}
     U&=\exp\left(\frac{c_u(\log\log T)^{\alpha}}{1-\sigma}\right),\quad V=\exp\left(\frac{c_v(\log\log T)^{\alpha}}{1-\sigma}\right),\quad W=\exp\left(\frac{c_w}{1-\sigma}\right),\\X&=\exp\left(\frac{c_x(\log\log T)^{\alpha}}{1-\sigma}\right),\quad Y=\exp\left(\frac{c_y(\log\log T)^{\alpha}}{1-\sigma}\right),
\end{aligned}
\end{equation}
such that $U<V$, $UV<X$, $U<Y<X$ and $W<X$. Our main goal is to estimate 
\begin{equation}\label{principal}
    \left(\sum_{r=1}^J\left|\sum_{n=U}^{XY}\Psi(n)\Theta(n) n^{-\rho}\right|\right)^2.
\end{equation}
Throughout the proof of Theorem \ref{theorem1general}, the variable $c$ (that may be different at each occurrence) indicates an effective computable constant.
\subsection{Estimate for $J$} First of all, we can assume that the quantity $N(\sigma_0,T)$ counts the number of zeros  $\rho=\beta+i\gamma$ with $\sigma_0\le \beta<1$ and imaginary part $X< \gamma\le T$. Indeed,  there are no zeros for $\gamma\le X$, since by the asymptotic zero-free region \eqref{kvzerofree} we have, for $T$ large,
\begin{align*}
    1-\beta&\ge\frac{A_0}{(\log \gamma)^{2/3}(\log\log \gamma)^{1/3}}\ge \frac{c_x^{-2/3}A_0(1-\sigma_0)^{2/3}}{(\log \log T)^{2\alpha/3}(\log(c_x(\log\log T)^{\alpha}/(1-\sigma_0)))^{1/3}}\\&\gg \frac{1
    }{(\log T)^{4/9}(\log \log T)^{2\alpha/3+2/9+1/3}}\gg \frac{K(T)}{(\log T)^{2/3}(\log\log T)^{1/3}}.
\end{align*}
\subsubsection{Lower bound}\begin{lemma}\label{lowerboundlemma}
    The following estimate holds for every representative zero $\rho_r=\beta_r+i\gamma_r$ with $\gamma> X$ and $T$ large:
    \begin{equation*}
    \sum_{r=1}^J\left|\sum_{n=U}^{X}\Psi(n)\Theta(n) n^{-\rho_r}\right|=|J|(1-o(1)).
    \end{equation*}
\end{lemma}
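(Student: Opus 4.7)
The plan is to exploit the Dirichlet-series factorisation induced by the Barban--Vehov weights. Expanding $\Psi(n)\Theta(n) = \sum_{[d_1,d_2]\mid n} \psi_{d_1}\theta_{d_2}$ and interchanging summations yields (for $\Re s>1$, and by analytic continuation thereafter)
\[
F(s) := \sum_{n\ge 1} \Psi(n)\Theta(n)\,n^{-s} = \zeta(s)\,G(s), \qquad G(s) := \sum_{\substack{d_1\le V\\ d_2\le W}} \frac{\psi_{d_1}\theta_{d_2}}{[d_1,d_2]^{s}}.
\]
Since $G$ is a finite Dirichlet polynomial (hence entire) and $\zeta(\rho_r)=0$, we have $F(\rho_r)=0$; in effect the Barban--Vehov weights act as a mollifier for $\zeta$ at each representative zero.

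Using $\Psi(1)=\Theta(1)=1$ together with $\Psi(n)=0$ for $2\le n\le U$, I would apply an effective Perron formula on a vertical line $\Re w = c > 1-\Re\rho_r$ to write
\[
\sum_{U\le n\le X}\Psi(n)\Theta(n)\,n^{-\rho_r} = -1 + \frac{1}{2\pi i}\int_{(c)} F(\rho_r+w)\,\frac{X^{w}}{w}\,dw,
\]
and then shift the line of integration to $\Re w=-\delta$ for some small fixed $\delta>0$. The residue of the pole of $1/w$ at $w=0$ is $F(\rho_r)=0$, so no $O(1)$ main term arises there, which is the key consequence of the factorisation. The residue of the simple pole of $\zeta(\rho_r+w)$ at $w=1-\rho_r$ equals $G(1)\,X^{1-\rho_r}/(1-\rho_r)$; this is harmless because the hypothesis $\gamma_r>X$ forces $|1-\rho_r|\ge \gamma_r>X$, so the residue is $\ll |G(1)|\,X^{-\beta_r}=o(1)$.

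The main obstacle is bounding the shifted integral on $\Re w=-\delta$. I would bound $|\zeta(\rho_r+w)|$ on this line by Proposition \ref{mybound}, estimate $|G(\rho_r+w)|$ crudely by $\sum |\psi_{d_1}\theta_{d_2}|\,[d_1,d_2]^{\delta-\sigma_0}$ (whose polynomial-in-$\log T$ size is controllable, with Lemma \ref{reciprocalgcd} supplying the relevant divisor estimates), and note that $|X^{w}|=X^{-\delta}$ along the line. Since the choice \eqref{parameters} makes $\log X \asymp (\log\log T)^{\alpha}/(1-\sigma)$ dominate any polynomial in $\log\log T$, the factor $X^{-\delta}$ overwhelms the remaining bounds and the shifted integral is $o(1)$ uniformly in $r$; Lemma \ref{numberzerosdisknew} is available if a sharper local estimate for $\zeta$ near $\rho_r$ is required.

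Putting the pieces together, $\left|\sum_{U\le n\le X}\Psi(n)\Theta(n)\,n^{-\rho_r}\right| = 1 + o(1)$ uniformly in $r$, and summing over $r=1,\ldots,J$ yields the claimed $|J|(1-o(1))$.
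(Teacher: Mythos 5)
Your overall strategy shares the key idea with the paper — replace $\zeta$ by a Dirichlet polynomial mollified so that the product vanishes at $\rho_r$, and conclude that the truncated coefficient sum is $1+o(1)$. Your algebraic identity $F(s)=\zeta(s)G(s)$ with $G(s)=\sum_{d_1,d_2}\psi_{d_1}\theta_{d_2}[d_1,d_2]^{-s}$ is in fact cleaner than the paper's corresponding step. But the paper proceeds differently: it truncates $\zeta(s)=\sum_{n\le t}n^{-s}+O(X^{-\sigma_0})$ via Euler--Maclaurin, multiplies by a finite mollifier $M(s)=\sum_{m\le Y}h(m)m^{-s}$, and then bounds the resulting tail $M(\rho_r)\sum_{X<n\le\gamma_r}n^{-\rho_r}$ by the Korobov--Vinogradov exponential sum estimate (Lemma~\ref{expsumbel}), giving a saving $\exp\bigl(-(\log N)^3/(D\log^2 T)\bigr)$ over each dyadic block with $N\ge X$.

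The genuine gap in your Perron approach is in the contour shift to $\operatorname{Re}w=-\delta$. There you must bound $\zeta(\rho_r+w)$ at real part $\beta_r-\delta$. Applying Proposition~\ref{mybound} gives $|\zeta(\beta_r-\delta+it)|\ll |t|^{B(1-\beta_r+\delta)^{3/2}}(\log|t|)^{2/3}$. If $\delta>0$ is a small \emph{fixed} constant, then $(1-\beta_r+\delta)^{3/2}\asymp\delta^{3/2}$ is a constant, so $|\zeta(\rho_r+w)|$ is bounded only by a power of $T$, namely $T^{B\delta^{3/2}}$. Since $\log X\asymp c_x(\log T)^{2/3}(\log\log T)^{\alpha+1/3}/K(T)=o(\log T)$, we have $\delta\log X=o\bigl(B\delta^{3/2}\log T\bigr)$ for any fixed $\delta$, so $X^{-\delta}T^{B\delta^{3/2}}\to\infty$; the factor $X^{-\delta}$ does not "overwhelm the remaining bounds'' as you claim. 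Choosing $\delta\to 0$ with $T$ does not rescue the argument either, because then the $(\log T)^{2/3}$ factor in Proposition~\ref{mybound} survives: with $\delta\asymp 1-\sigma_0$ one finds $X^{-\delta}|\zeta|\gg \exp(\tfrac23\log\log T - O((\log\log T)^{\alpha}))\to\infty$ for $\alpha<1$. The only way to make the shifted integral small is to optimise $\delta$ precisely, at $\delta\asymp(\log X)^2/(\log T)^2$, and even then one must take $c_x$ large so that $(4/27)(c_x^3/B^2)>2/3$. This balancing is exactly the exponential-sum optimisation that the paper performs directly on $\sum_{X<n\le\gamma_r}n^{-\rho_r}$ (its maximum is at $N=X$, giving \eqref{kvlowerbound}), and it is where the cubic saving $\exp(-(\log X)^3/(D\log^2T))$ actually enters. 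Without this optimisation, your shifted integral is not $o(1)$ and the proof is incomplete.
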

\begin{proof}
The Euler–Maclaurin summation formula for $\zeta(s)$ implies that, for any $s=\sigma+it$ with $X<t<T$ and $\sigma\ge \sigma_0$,
\begin{equation*}
\zeta(s)=\sum_{1 \leq n \leq t} n^{-s}+O\left( \frac{1}{X^{\sigma_0}}\right).
\end{equation*}
Given $Y>VW$, define the Dirichlet polynomial $M(s) $ via the convolution:
\[
M(s) := \sum_{n \le Y} h(n) n^{-s}, \qquad \text{where } h(n) := \sum_{d \mid n} \psi_d \theta_{n/d} = (\psi * \theta)(n).
\]
Hence,
\begin{align*}
  \zeta(s) M(s)=\sum_{n \le XY} \left( \sum_{dm = n} h(m) \right) n^{-s}+M(s)\sum_{X < n \leq t}n^{-s}+O\left(\frac{M(s)}{X^{\sigma_0}}\right),
\end{align*}
and since $h= \psi * \theta $, we have
\[
\sum_{dm = n} h(m) = \sum_{dm = n} (h * 1)(n) = (\psi * \theta * 1)(n) = (\psi * 1)(n) \cdot (\theta * 1)(n) = \Psi(n) \Theta(n).
\]
Therefore, using the fact that $\Psi(n)=0$ for $2\le n\le U$, we have 
\begin{align*}
    \left|M(s)\zeta(s)-1-\sum_{U<n\le XY}\frac{\Psi(n)\Theta (n)}{n^s}\right|\ll \left|M(s)\sum_{X < n \leq t}n^{-s}\right|+\frac{|M(s)|}{X^{\sigma_0}}.
\end{align*}
Evaluating at $s=\rho_r$ gives
\begin{align*}
    \left|-1-\sum_{U<n\le XY}\frac{\Psi(n)\Theta (n)}{n^{\rho_r}}\right|\ll \left|M(\rho_r)\sum_{X < n \leq \gamma_r}n^{-\rho_r}\right|+\frac{|M(\rho_r)|}{X^{\sigma_0}},
\end{align*}
and hence 
\begin{equation*}
    \left|\sum_{U<n\le XY}\frac{\Psi(n)\Theta (n)}{n^{\rho_r}}\right|= 1-o(1),
\end{equation*}
provided that
\begin{equation*}
    \left|M(\rho_r)\sum_{X < n \leq \gamma_r}n^{-\rho_r}\right|+\frac{|M(\rho_r)|}{X^{\sigma_0}}=o(1).
\end{equation*}
By dyadic division, partial summation and the fact that $\beta_r\ge \sigma$, we have
\begin{align*}
     \left|\sum_{X< n\le \gamma_r}n^{-\beta_r-i\gamma_r}\right|&\ll \log(\gamma_r/X)\max_{X\le N\le \gamma_r}\left(N^{-\sigma}\max_{N\le N'<2N}\left|\sum_{N\le n\le N'}n^{-i\gamma_r}\right|\right).
    \end{align*}
Lemma \ref{expsumbel} implies 
\begin{align*}
   \max_{N\le N'<2N}\left|\sum_{N\le n\le N'}n^{-i\gamma_r}\right|\ll  N\exp\left(-\frac{(\log N)^3}{D(\log \gamma_r)^2}\right).
\end{align*}
Hence for every $X\le \gamma_r\le T$ and $\beta_r\ge \sigma$,
\begin{align*}
    \left|\sum_{X\le n\le \gamma_r}n^{-\beta_r-i\gamma_r}\right|&\ll \log T\max_{X\le N\le T}\left(N^{1-\sigma}\exp\left(-\frac{(\log N)^3}{D (\log T)^2}\right)\right)\\
    &= \log T\max_{X\le N\le T}\left(\exp\left((1-\sigma)\log N-\frac{(\log N)^3}{D(\log T)^2}\right)\right).
\end{align*}
The maximum is reached at
\[
\log N=\sqrt{\frac{1-\sigma}{3D}}\log T,
\]
which is always smaller than $\log X$ for our range of $\sigma$. Hence, since the function is decreasing in $N$ when $X\le N\le T$, the maximum will be in $N=X$:
\begin{equation}\label{kvlowerbound}
        \begin{aligned}
    &\left|\sum_{X\le n\le \gamma_r}n^{-\beta_r-i\gamma_r}\right|\ll \log T\left(\exp\left(-\frac{c(\log\log T)^{3\alpha+1}}{D(K(T))^3}\right)\right).
\end{aligned}
\end{equation}
It remains to estimate $|M(\rho_r)|$. Trivially,
\begin{equation}
    |M(\rho_r)|\ll \sum_{n\le Y}\left(\sum_{d|n}1\right)n^{-\beta_r}\ll \sum_{n\le Y}\frac{d(n)}{n^{\sigma}}\ll Y^{1-\sigma}(\log Y)^2,
\end{equation}
where we used the known bound $\sum_{n\le Y}d(n)n^{-1}\ll (\log Y)^2$ and $\beta_r\ge \sigma$.
Lemma \ref{lowerboundlemma} follows.
\end{proof}
\subsubsection{Upper bound} Consider, for a fixed zero $\rho$,
\[
f(\rho)=\sum_{n=U}^{XY}\Psi(n)\Theta(n) n^{-\rho}.
\]
An application of Lemma \ref{lemma7jutila} to $f(\rho)$ with, for $U<n\le XY$,
\begin{align*}
a_n  =\Psi(n) \Theta(n) n^{-1 / 2} \qquad b_n =\Theta^2(n), \qquad s_j=\rho_j-1 / 2,
\end{align*}
and $0$ otherwise, gives
\begin{equation}\label{afterhalaszmont}
 \begin{aligned}
    &   \left(\sum_{r=1}^{|J|}\left|\sum_{n=U}^{XY}\Psi(n)\Theta(n) n^{-\rho_r}\right|\right)^2\le \left(\sum_{n=U}^{XY}\frac{\Psi^2(n)}{n}\right)\left|\sum_{r,s\le |J|}\sum_{n=U}^{XY}\Theta^2(n)n^{1-\beta_r-\beta_k-i(\gamma_r-\gamma_k)}\right|.
\end{aligned}
\end{equation}
We will estimate the two factors separately. For the first factor, we follow \cite{BELLOTTIlogfree}.
\begin{lemma}\label{an2bn}
   The following estimate holds:
    \[
     \sum_{n=U}^{XY}\frac{\Psi^2(n)}{n}=O(1).
    \] 
\end{lemma}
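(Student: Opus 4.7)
The plan is to combine the vanishing $\Psi(n)=0$ for $2\le n\le U$ with a Barban--Vehov-type mean value for $\Psi^2$ and partial summation. Since $\Psi(n)=0$ on $[2,U]$, the sum in question reduces to $\sum_{U<n\le XY}\Psi^2(n)/n$ (the $n=U$ contribution vanishing). The parameter choices in \eqref{parameters} are designed so that every ratio of the form $\log(XY)/\log(V/U)$ and $\log V/\log(V/U)$ is an absolute constant in $T$, and this is what will ultimately deliver the bound $O(1)$. I would then split the sum at $V$ and treat the two ranges separately.

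For the tail $V<n\le XY$, I would first establish the $\Psi$-analogue of \eqref{constd1},
\[
\sum_{n\le N}\Psi^2(n)\ll \frac{N}{\log(V/U)}\qquad(N\ge V),
\]
by the same Barban--Vehov / Graham argument used to prove \eqref{constd1} (recall that $\Theta$ is the specialisation $U=1$, $V=W$ of $\Psi$, so the argument extends). Ordinary partial summation then yields
\[
\sum_{V<n\le XY}\frac{\Psi^2(n)}{n}\ll \frac{\log(XY/V)}{\log(V/U)}=\frac{c_x+c_y-c_v}{c_v-c_u}=O(1).
\]

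For the middle range $U<n\le V$, partial summation from the mean value alone is too crude, since the bound $\sum_{n\le t}\Psi^2(n)\ll t/\log(V/U)$ is only sharp near $t=V$ and a direct application would contribute a spurious factor of order $V/U$. Instead I would expand $\Psi^2(n)=\sum_{d_1,d_2\mid n}\psi_{d_1}\psi_{d_2}$, swap the order of summation, and evaluate the resulting inner harmonic sum $\sum_{[d_1,d_2]\mid n,\ U<n\le V}1/n$. The dominant contribution is (essentially) $\log V$ times the classical Barban--Vehov--Graham double sum $\sum_{d_1,d_2\le V}\psi_{d_1}\psi_{d_2}/[d_1,d_2]\asymp 1/\log(V/U)$, producing a main term of size $\log V/\log(V/U)=c_v/(c_v-c_u)=O(1)$; the remaining $\log[d_1,d_2]$-weighted variants and harmonic-sum errors are controlled by the same M\"obius-cancellation mechanism, with Lemma \ref{reciprocalgcd} keeping the auxiliary divisor sums $\sum 1/[d_1,d_2]$ under control.

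The main obstacle I expect is the clean evaluation of these Barban--Vehov--Graham double sums over $(d_1,d_2)$, together with their $\log[d_1,d_2]$-weighted variants: the estimates are classical but require careful bookkeeping of the M\"obius signs built into the $\psi_d$'s in order to extract the correct $1/\log(V/U)$ saving. Once these pieces are in hand, summing the contributions from the two ranges yields the bound $\sum_{n=U}^{XY}\Psi^2(n)/n=O(1)$ claimed in the lemma.
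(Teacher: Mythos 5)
The paper discharges this lemma with a single citation: it applies an explicit inequality from \cite{BELLOTTIlogfree} (after substituting $z_1=U$, $z_2=V=U^v$, $v=c_v/c_u$) that bounds $\sum_{n=U}^{XY}\Psi^2(n)/n$ directly by $3.09\cdot\frac{\log(XY)}{\log(V/U)}\cdot F(v)$ for an explicit rational $F$, and this is $O(1)$ because the choices in \eqref{parameters} make $\log(XY)/\log(V/U)=(c_x+c_y)/(c_v-c_u)$ and $v$ absolute constants. Your proposal reconstructs this from first principles via the Barban--Vehov--Graham machinery, which is the same underlying mathematics unpacked rather than cited; the tail range $V<n\le XY$ is handled cleanly. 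For the middle range $U<n\le V$ you correctly sense that partial summation from the mean value is insufficient, but the stated reason is off: the issue is not that $\sum_{n\le t}\Psi^2(n)\ll t/\log(V/U)$ is ``only sharp near $t=V$,'' it is that the bound need not hold at all for $t$ below the range where the Graham error term (a power of $V$) is dominated by $t/\log(V/U)$, and falling back on the endpoint value $S(V)\ll V/\log(V/U)$ is what produces the spurious $V/U$ you mention. Your alternative --- expanding $\Psi^2$ into a double divisor sum and invoking $\sum\psi_{d_1}\psi_{d_2}/[d_1,d_2]\asymp 1/\log(V/U)$ together with truncated and $\log$-weighted variants --- is the right idea and is essentially what the cited estimate encapsulates, but as you acknowledge those truncated and weighted sums are exactly the nontrivial content, and they are asserted rather than established. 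The outline is sound and consistent with the paper's ingredients, but the middle-range estimate is incomplete as written, whereas the paper covers the whole range $U\le n\le XY$ at once by invoking the explicit bound from \cite{BELLOTTIlogfree}.
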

\begin{proof}
  Following \cite{BELLOTTIlogfree} with $z_1=U$, $z_2=V=U^v$, $t=v=c_v/c_u$, one has
    \begin{equation*}
        \begin{aligned}
          &\sum_{n=U}^{XY}\frac{\Psi^2(n)}{n}\le3.09\cdot \frac{\log (XY)}{\log(V/U)}\cdot \frac{1.301(v^2+1)+1.084(v+1)-0.116}{v-1}=O(1).
        \end{aligned}
    \end{equation*}
\end{proof}
For the second factor in \eqref{afterhalaszmont}, we start considering the diagonal terms $r=k$.
\begin{lemma}[Diagonal terms]\label{lem:diagonal}
  The following estimate holds:
 \[\sum_{r\le |J|}\left|\sum_{n=U}^{XY}\Theta^2(n)n^{1-2\beta_r}\right|\ll |J|\exp(2(c_x+c_y)(\log\log T)^{\alpha}). \]
\end{lemma}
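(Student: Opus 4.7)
\textbf{Proof plan for Lemma \ref{lem:diagonal}.} The main observation is that for $r = k$ the phase $n^{-i(\gamma_r - \gamma_k)}$ collapses to $1$, so each inner sum $\sum_{U \le n \le XY} \Theta^2(n) n^{1 - 2\beta_r}$ is a finite sum of nonnegative reals; the absolute values may simply be dropped and we are left with a sum of positive quantities.

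To extract the exponential factor that appears on the right-hand side of the lemma, I would write $n^{1 - 2\beta_r} = n^{-1}\cdot n^{2(1 - \beta_r)}$ and use $n \le XY$ together with $\beta_r \ge \sigma$ (the latter from the rectangle decomposition in \eqref{rectangles}) to obtain
\begin{equation*}
n^{1 - 2\beta_r} \;\le\; n^{-1}(XY)^{2(1 - \sigma)} \;=\; n^{-1}\exp\bigl(2(c_x + c_y)(\log \log T)^{\alpha}\bigr),
\end{equation*}
where the second equality is immediate from the choice of $X,Y$ in \eqref{parameters}. This pulls the desired exponential cleanly outside the sum, reducing the problem to estimating $\sum_{U \le n \le XY} \Theta^2(n)/n$.

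For the remaining sum I would apply the mean-square bound \eqref{constd1}, which gives $\sum_{n \le N} \Theta^2(n) \ll N/\log W$ for $N > W$; for $N \le W$ the identity $\Theta(n) = \Lambda(n)/\log W$ from \eqref{thetaaslambda}, combined with the Chebyshev-type bound $\sum_{n \le N} \Lambda^2(n) \ll N \log N$, yields the same order of growth, so the estimate holds uniformly in $N \ge 1$. Partial summation then produces
\begin{equation*}
\sum_{U \le n \le XY} \frac{\Theta^2(n)}{n} \;\ll\; \frac{\log(XY)}{\log W} \;=\; O\bigl((\log \log T)^{\alpha}\bigr).
\end{equation*}
Summing over the $|J|$ representative zeros gives the lemma; the residual $(\log \log T)^{\alpha}$ factor is subexponential in $(\log \log T)^{\alpha}$ and can be absorbed into the exponential at the cost of an arbitrarily small enlargement of the constant in the exponent (and ultimately into the free constant $B$ of Theorem \ref{theorem1general}). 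There is no real analytic obstacle here: the argument reduces to positivity of the diagonal terms and the mean-square bound \eqref{constd1}, and the only care needed is in the bookkeeping of the parameters \eqref{parameters}.
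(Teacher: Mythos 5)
Your proof is essentially the paper's proof — both reduce to partial summation against the mean-square bound \eqref{constd1} — but there is a small bookkeeping difference worth flagging. You pull $(XY)^{2(1-\sigma)}$ out immediately by writing $n^{1-2\beta_r}\le n^{-1}(XY)^{2(1-\sigma)}$ and then estimate $\sum_{U\le n\le XY}\Theta^2(n)/n\ll \log(XY)/\log W\asymp(\log\log T)^\alpha$, so you end with $|J|(\log\log T)^\alpha\exp(2(c_x+c_y)(\log\log T)^\alpha)$, which is weaker than the stated $|J|\exp(2(c_x+c_y)(\log\log T)^\alpha)$. The paper instead applies Abel summation directly to $\sum_{U\le n\le XY}\Theta^2(n)n^{1-2\beta_r}$, so the relevant integral is $\int_U^{XY}u^{1-2\sigma}\,du\asymp(XY)^{2-2\sigma}/(1-\sigma)$; the factor $1/(1-\sigma)$ exactly cancels the $1-\sigma$ hidden in $1/\log W=(1-\sigma)/c_w$, and the constant comes out as a clean $O(1)$ with no stray $\log(XY)/\log W$. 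Your crude pointwise bound $n^{2(1-\beta_r)}\le(XY)^{2(1-\sigma)}$ throws away precisely this cancellation. You do correctly observe that the residual $(\log\log T)^\alpha$ factor is harmless downstream (it vanishes for $\alpha=0$, and for $\alpha>0$ it is subexponential and gets absorbed into the unspecified constant $B$ of Theorem \ref{theorem1general}), so the argument suffices for the applications; but as written it does not quite reproduce the constant in the lemma as stated. Your side remarks about $N\le W$ via \eqref{thetaaslambda} and $\sum\Lambda^2(n)\ll N\log N$ are unnecessary since $U>W$ for the parameter choices in \eqref{parameters}, but they do no harm.
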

\begin{proof}Note that
\begin{align*}
    &\sum_{n=U}^{XY}\Theta^2(n)n^{1-2\beta_r}\le (XY)^{1-2\sigma}\sum_{n\le XY}\Theta^2(n)+(2\sigma-1)\int_{U}^{XY}\left(\sum_{n\le u}\Theta^2(n)\right)\frac{1}{u^{2\sigma}}\text{d}u.
\end{align*}
An application of \eqref{constd1} gives
\begin{align*}
   \int_{U}^{XY}\left(\sum_{n\le u}\Theta^2(n)\right)\frac{1}{u^{2\sigma}}\text{d}u&\le \frac{d_1}{\log W}\int_{U}^{XY}\frac{1}{u^{2\sigma-1}}\text{d}u\le  \frac{d_1(XY)^{2-2\sigma}}{\log W(2\sigma-2)}.
\end{align*}
It follows that
\begin{align*}
    \sum_{n=1}^{XY}\Theta^2(n)n^{1-2\beta_r}&\ll\frac{(XY)^{2-2\sigma}}{(1-\sigma)\log W}\ll  \exp(2(c_x+c_y)(\log\log T)^{\alpha}).
\end{align*}
\end{proof}
Now we deal with the off-diagonal terms. 
\begin{lemma}[Off-diagonal terms]\label{lem:offdiagonal}
    If $r\neq k$ the following estimate holds:
    \begin{align*}
        &\sum_{\substack{r\neq k\\r,k\le |J|}}\left|\sum_{n=U}^{XY}\Theta^2(n)n^{1-\rho_r-\overline{\rho_k}}\right|\ll  |J|(\log|J|)(XY)^{2(1-\sigma)}+o(1)|J|^2.
    \end{align*}
\end{lemma}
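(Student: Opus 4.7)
The plan is to reduce the off-diagonal sum to a collection of oscillating exponential sums by exploiting the Dirichlet convolution structure of $\Theta$. Writing $\Theta^2(n)=\sum_{d_1,d_2\le W}\theta_{d_1}\theta_{d_2}\mathbf{1}_{[d_1,d_2]\mid n}$ and substituting $n=[d_1,d_2]m$ yields
\[
\sum_{n=U}^{XY}\Theta^2(n)n^{1-\rho_r-\overline{\rho_k}}=\sum_{d_1,d_2\le W}\theta_{d_1}\theta_{d_2}[d_1,d_2]^{1-\rho_r-\overline{\rho_k}}\sum_{m\in I_{d_1,d_2}} m^{1-\rho_r-\overline{\rho_k}},
\]
where $I_{d_1,d_2}\subseteq[1,XY/[d_1,d_2]]$ is the induced $m$-range. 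Setting $\sigma'=\beta_r+\beta_k-1\in[2\sigma-1,1)$ and $t=\gamma_r-\gamma_k$, the problem reduces to bounding the oscillating sum $\sum_m m^{-\sigma'-it}$, which will be treated differently depending on the size of $|t|$, together with the divisor sum $\sum_{d_1,d_2\le W}|\theta_{d_1}\theta_{d_2}|[d_1,d_2]^{-\sigma'}$, which I would control via Lemma \ref{reciprocalgcd}.

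Next I would split the off-diagonal pairs $(r,k)$ according to the size of $|\gamma_r-\gamma_k|$, with threshold $T_0:=\delta\log|J|$. Because the system $J$ is $\delta$-well-spaced, each zero $\rho_r$ admits at most $O(T_0/\delta)=O(\log|J|)$ partners with $|\gamma_r-\gamma_k|\le T_0$, so the total number of close pairs is $O(|J|\log|J|)$. For each close pair the inner sum is estimated trivially by
\[
\left|\sum_{m\in I_{d_1,d_2}} m^{-\sigma'-it}\right|\ll \frac{M^{1-\sigma'}}{2\sigma-1}\ll M^{2(1-\sigma)},\qquad M=\frac{XY}{[d_1,d_2]},
\]
and the divisor-pair summation is absorbed into the implicit constant via Lemma \ref{reciprocalgcd}. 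Multiplying the resulting per-pair bound by the count $|J|\log|J|$ produces the first term $|J|(\log|J|)(XY)^{2(1-\sigma)}$ of the claimed estimate.

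For the far pairs with $|\gamma_r-\gamma_k|>T_0$, I would apply partial summation to rewrite $\sum_m m^{-\sigma'-it}$ in terms of $\max_{u\le M}|\sum_{m\le u}m^{-it}|$ and invoke the exponential-sum bound of Lemma \ref{secondderiv} with $f(x)=-t\log x/(2\pi)$, giving $\lambda_2\asymp|t|/u^2$ and hence a genuine saving over the trivial bound once $|t|$ is sufficiently large. A dyadic decomposition of the range $|\gamma_r-\gamma_k|\in[2^jT_0,2^{j+1}T_0]$ produces $O(|J|\cdot 2^jT_0/\delta)$ pairs per band and a per-pair bound that decays in $|t|$, so that summing over bands yields a total far-pair contribution of the required size $o(1)|J|^2$ under the parameter choices in \eqref{parameters}. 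The main obstacle is verifying that the exponential-sum savings actually dominates the number-of-pairs factor $|J|^2$: this boils down to checking that $XY$ (hence $M$) is large enough relative to the range of $|t|$ for Lemma \ref{secondderiv} to be usable with nontrivial gain, which is precisely what the parameter scaling $(\log\log T)^\alpha/(1-\sigma)$ in \eqref{parameters} is designed to guarantee. Combining the close- and far-pair contributions then yields the bound stated in the lemma.
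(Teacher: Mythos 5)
Your outline captures the convolution structure of $\Theta^2$ and the use of Lemma \ref{reciprocalgcd}, and the trivial treatment of the close pairs (with $|\gamma_r-\gamma_k|\le T_0=\delta\log|J|$) does deliver the $|J|(\log|J|)(XY)^{2(1-\sigma)}$ main term. But the treatment of the far pairs has a genuine gap. You propose to handle all $|t|:=|\gamma_r-\gamma_k|>T_0$ via the second-derivative test (Lemma \ref{secondderiv}). With $\lambda_2\asymp|t|[d,e]^2/N^2$, that lemma yields $S\ll N\lambda_2^{1/2}+\lambda_2^{-1/2}$, and the term $\lambda_2^{-1/2}\asymp N/([d,e]\,|t|^{1/2})$ \emph{exceeds the trivial bound} $N/[d,e]$ as soon as $|t|\lesssim 1$. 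Since $T_0=\delta\log|J|$ is tiny (of order $(1-\sigma)\log|J|\ll1$), there is a wide window $T_0<|t|\lesssim 1$ in which Lemma \ref{secondderiv} gives no gain whatsoever, independently of how large $XY$ is. In that window there are $\gg|J|/\delta$ pairs, so replacing Lemma \ref{secondderiv} by the trivial bound would produce a contribution $\gg\frac{|J|(XY)^{2(1-\sigma)}}{\delta}$, which vastly exceeds both terms in the claimed estimate. Enlarging $XY$ cannot repair this: the issue is that $\lambda_2^{-1/2}$ scales like the number of summands when $|t|$ is $O(1)$, so there is simply no saving to be had from curvature alone.

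The missing ingredient is the mechanism the paper uses in its ``moderate $|t|$'' regime ($\delta<|t|<U^{1-\varepsilon}$): Abel summation of $\sum_m m^{1-\alpha-it}$ (not merely peeling off $m^{-it}$) produces a boundary factor $\frac{1}{2-\alpha-it}$ of size $\asymp 1/|t|$, plus a fractional-part integral that is bounded trivially by an $o(1)$ term after checking $|t|\,W^2\,U^{1-2\sigma}=o(1)$. It is this $1/|t|$ decay, combined with the $\delta$-well-spacing of $J$ via $\sum_{r\ne k}|\gamma_r-\gamma_k|^{-1}\ll\frac{|J|\log|J|}{\delta}$ and the normalisation $(1-\sigma)\log W\asymp1$, that gives the main term $|J|\log|J|(XY)^{2(1-\sigma)}$. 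Only for $|t|\gtrsim U^{1-\varepsilon}$ does the paper switch to exponential-sum input, and even there it uses two tools, not one: Lemma \ref{secondderiv} when $|t|<N/[d,e]$ and the Korobov--Vinogradov bound of Lemma \ref{expsumbel} when $|t|\ge N/[d,e]$ (which your sketch omits; the second-derivative test alone is not sharp enough at the very large-$|t|$ end). So the core of your far-pair argument needs to be replaced: insert a partial-summation step producing $1/|t|$ decay for $T_0<|t|<U^{1-\varepsilon}$, and reserve the exponential-sum machinery (both lemmas) for $|t|\ge U^{1-\varepsilon}$.
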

\begin{proof} To ease the notation, we denote $t:=\gamma_r-\gamma_k$ and $\alpha :=\beta_r+\beta_k\ge 2\sigma$. We consider two cases, depending on the size of $|t|$.
We start analyzing the case $\delta<|t|<U^{1-\varepsilon}$, where $\varepsilon>0 $ is a sufficiently small fixed constant. By the definition of $\Theta^2(n)$,
\begin{align*}
   &\Sigma:=\sum_{U\le n\le XY}\Theta^2(n)n^{1-\alpha-it}=\sum_{d,e\le W}\theta_d\theta_e\sum_{\substack{U<n<XY\\ [d,e]|n}}\frac{1}{n^{\alpha-1+it}}\\&=\sum_{d,e\le W}\frac{\theta_d\theta_e}{[d,e]^{\alpha-1+it}}\sum_{U/[d,e]<n<XY/[d,e]}\frac{1}{n^{\alpha-1+it}}.
\end{align*}
Applying partial summation to the inner sum
\begin{equation*}
\begin{aligned}
&\sum_{U /[d, e]<n<XY /[d, e]} \frac{1}{n^{\alpha-1+i t}}\\&=\left(\frac{ XY}{[d, e]}\right)^{2-\alpha-i t}-\left(\frac{U}{[d, e]}\right)^{2-\alpha-i t}+(i t+\alpha-1) \int_{U /[d, e]}^{ XY /[d, e]} \frac{u-\{u\}}{u^{\alpha+i t}} \mathrm{~d} u \\
& =\frac{(XY)^{2-\alpha-i t}-U^{2-\alpha-i t}}{[d, e]^{2-\alpha-i t}}+(i t+\alpha-1) \frac{(XY)^{2-\alpha-i t}-U^{2-\alpha-i t}}{[d, e]^{2-\alpha-i t}(2-\alpha-i t)}\\ &\ \ \ -(i t+\alpha-1) \int^{XY/[d, e]}_{ U /[d, e]} \frac{\{u\}}{u^{\alpha+i t}} \mathrm{~d} u \\
& =\frac{(XY)^{2-\alpha-i t}-U^{2-\alpha-i t}}{[d, e]^{2-\alpha-i t}(2-\alpha-i t)}-(i t+\alpha-1) \int^{XY/[d, e]}_{ U /[d, e]} \frac{\{u\}}{u^{\alpha+i t}} \mathrm{~d} u .
\end{aligned}
\end{equation*}
Hence, by Lemma 4.3 of \cite{BELLOTTIlogfree},
\begin{equation}\label{constc4}
    \begin{aligned}
   & \left|\sum_{d,e\le W}\frac{\theta_d\theta_e}{[d,e]^{\alpha-1+it}}\sum_{U/[d,e]<n<XY/[d,e]}\frac{1}{n^{\alpha-1+it}}\right|\\&\le \left((XY)^{2-2\sigma}+U^{2-2\sigma}\right)\left|\frac{1}{(2-\alpha-i t)}\sum_{d,e\le W}\frac{\theta_d\theta_e}{[d,e]} \right|\\&\ \ \  +\left|\sum_{d,e\le W}\frac{(|t|+\alpha-1)}{[d,e]^{\alpha-1}}\frac{[d,e]^{\alpha-1}}{\alpha-1}\left(\frac{1}{U^{\alpha-1}}-\frac{1}{(XY)^{\alpha-1}}\right)\right|\\
    &\ll \frac{(XY)^{2-2\sigma}}{|t|\log W}+\left(\frac{1}{U^{\alpha-1}}-\frac{1}{(XY)^{\alpha-1}}\right)\left|\sum_{d,e\le W}\left(\frac{|t|}{\alpha-1}+1\right)\right|\\
    &\ll \le \frac{(XY)^{2-2\sigma}}{|t|\log W}+\frac{U^{1-\varepsilon}W^2}{U^{2\sigma-1}}+\frac{W^2}{U^{2\sigma-1}}\\
    &\ll\frac{(XY)^{2-2\sigma}}{|t|\log W}+o(1),
\end{aligned}
\end{equation}
provided that $(2\sigma-2+\varepsilon)>0$, which is possible if $\varepsilon>0$ is chosen sufficiently small.
We now focus our attention to the case $|t|\ge U^{1-\varepsilon}$. By dyadic division and partial summation we have
\begin{align*}
    & \left|\Sigma\right|\ll \log(XY/U)\max_{U\le N\le XY}\left(N^{1-\alpha}\max_{N\le N'<2N}\left|\sum_{N\le n\le N'}\Theta^2(n)n^{-it}\right|\right),
\end{align*}
and, similarly to the previous case,
\begin{align*}
   \sum_{N\le n\le N'}\Theta^2(n)n^{-it}=\sum_{d,e\le W}\theta_d\theta_e\sum_{\substack{N<n<N'\\ [d,e]|n}}\frac{1}{n^{it}}=\sum_{d,e\le W}\frac{\theta_d\theta_e}{[d,e]^{it}}\sum_{N/[d,e]<n<N'/[d,e]}\frac{1}{n^{it}}.
\end{align*}
If $|t|\ge N/[d,e]$, then an application of Lemma \ref{expsumbel} and Lemma \ref{reciprocalgcd} gives
    \begin{align*}
        &\left|\sum_{d,e\le W}\frac{\theta_d\theta_e}{[d,e]^{it}}\sum_{N/[d,e]<n<N'/[d,e]}\frac{1}{n^{it}}\right|\le \sum_{d,e\le W}\left|\sum_{N/[d,e]<n<N'/[d,e]}\frac{1}{n^{it}}\right|\\
        &\ll \sum_{d,e\le W} \frac{N}{[d,e]}\exp\left(-\frac{(\log(N/[d,e]))^3}{D\log^2T}\right)\ll  N \exp\left(-\frac{(\log(N/W^2))^3}{D\log^2T}\right) (\log W)^3.
        \end{align*}
    Hence, when $|t|\ge N/[d,e]$,
    \begin{equation}\label{expsumwithvin}
    \begin{aligned}
         \left|\Sigma\right|&\ll  \log(XY/U) (\log W)^3\max_{U\le N\le XY}\left(N^{2-2\sigma} \exp\left(-\frac{(\log(N/W^2))^3}{D\log^2T}\right)\right)\\&\ll  \log(XY/U) (\log W)^3U^{2-2\sigma}\exp\left(-\frac{(\log(U/W^2))^3}{D\log^2T}\right)\\&\ll \frac{(\log\log T)^{\alpha}}{(1-\sigma)^4}\exp\left(-\frac{c(\log\log T)^{3\alpha}}{D(1-\sigma)^3\log^2T}\right).
    \end{aligned}
    \end{equation}
    Since the right-hand side in \eqref{expsumwithvin} is increasing in $1-\sigma$ when $\sigma$ satisfies $\sigma\ge 1-K(T)(\log T)^{-2/3}(\log\log T)^{-1/3}$, one has
  \begin{equation}\label{constc5}
  \begin{aligned}
      &\left|\Sigma\right|\ll\frac{(\log T)^{8/3}(\log\log T)^{4/3+\alpha}}{(K(T))^4}\exp\left(-\frac{c(\log\log T)^{3\alpha+1}}{D(K(T))^3}\right)=o(1).
  \end{aligned}
    \end{equation}
    It remains to consider the case $U^{1-\varepsilon}<|t|<N/[d,e]$. Rewrite 
    \[
    \sum_{N/[d,e]<n<N'/[d,e]}\frac{1}{n^{it}}=\sum_{N/[d,e]<n<N'/[d,e]}e(f(n)),\qquad f(x):=-\frac{t}{2\pi}\log x.
    \]
 Apply Lemma \ref{secondderiv} with
 \[
 \lambda_2:=\frac{|t|[d,e]^2}{(N')^2},
 \]
so that
 \begin{align*}
    \left| \sum_{N/[d,e]<n<N'/[d,e]}\frac{1}{n^{it}}\right|\ll N^{1/2}[d,e]^{1/2}+ \frac{N}{U^{(1-\varepsilon)/2}[d,e]}.
 \end{align*}
 Hence, by Lemma \ref{reciprocalgcd},
 \begin{align*}
      \left|\sum_{d,e\le W}\frac{\theta_d\theta_e}{[d,e]^{it}}\sum_{N/[d,e]<n<N'/[d,e]}\frac{1}{n^{it}}\right|&\ll N^{1/2}W^3+N\frac{(\log W)^3}{U^{(1-\varepsilon)/2}}.
 \end{align*}
 It follows that, for $U^{1-\varepsilon}<|t|<N/[d,e]$,
 \begin{equation}\label{constc6}
    \begin{aligned}
       &\left|\Sigma\right|\ll\frac{(\log\log T)^\alpha }{(1-\sigma)}\max_{U\le N\le XY}\left(N^{3/2-2\sigma}W^3+N\frac{(\log W)^3}{U^{(1-\varepsilon)/2}}\right)\\
        &\ll\frac{(\log\log T)^{\alpha+1/3}(\log T)^{2/3} }{K(T)}\left(\frac{W^3}{U^{-3/2+2\sigma}}+\exp(2(c_x+c_y)(\log\log T)^\alpha)\frac{(\log W)^3}{U^{(1-\varepsilon)/2}}\right)\\
        &=o(1).
    \end{aligned}
    \end{equation}
Combining \eqref{constc4}, \eqref{constc5} and \eqref{constc6}, one has
\begin{equation*}
    \begin{aligned}
         &\sum_{\substack{r\neq k\\r,k\le |J|}}\left|\sum_{n=U}^{XY}\Theta^2(n)n^{1-\rho_r-\overline{\rho_k}}\right|\ll\frac{(XY)^{2(1-\sigma)}}{\log W}\sum_{\substack{r\neq k\\r,k\le |J|}} \frac{1}{|\gamma_r-\gamma_k|}+o(1)|J|^2\\&\ll  |J|(\log|J|)\frac{(XY)^{2(1-\sigma)}}{(1-\sigma)\log W}+o(1)|J|^2\ll  |J|(\log|J|)(XY)^{2(1-\sigma)}+o(1)|J|^2.
    \end{aligned}
\end{equation*}
\end{proof}
Combining Lemma \ref{an2bn}, Lemma \ref{lem:diagonal} and Lemma \ref{lem:offdiagonal} gives
\begin{equation}\label{upperboundfinal}
 \begin{aligned}
    & \left(\sum_{r=1}^J\left|\sum_{n=U}^{XY}\Psi(n)\Theta(n) n^{-\rho}\right|\right)^2\ll|J|(\log|J|)(XY)^{2(1-\sigma)}+o(1)|J|^2.
\end{aligned}
\end{equation}
\subsection{Conclusion}
Combine Lemma \ref{lowerboundlemma} and \eqref{upperboundfinal}:
\begin{align*}
    &(1-o(1))|J|^2\ll |J|\log|J|(XY)^{2(1-\sigma)},
\end{align*}
and hence
\begin{align*}
    |J|\ll  \log|J|(XY)^{2(1-\sigma)}.
\end{align*}
Since
\[
|J|^{1-o(1)}\ll \frac{|J|}{\log |J|}\ll (XY)^{2(1-\sigma)},
\]
then the following estimate holds:
\begin{equation*}
    |J|\ll \exp(2(c_x+c_y)(1+o(1))(\log\log T)^\alpha).
\end{equation*}
Note that each box counted by $J$ is contained in a circle centered in $1+it$ and radius $r=\sqrt{5}(1-\sigma)/2$. Hence, an application of Lemma \ref{numberzerosdisknew} with 
\begin{equation*}
    K=\frac{\sqrt{5}}{2}(1-\sigma)(\log\log T)^{1/3}(\log T)^{2/3}\ll K(T)
\end{equation*}
implies
\begin{equation*}
\begin{aligned}
   N(\sigma,T)&\ll \exp(2(c_x+c_y)(1+o(1))(\log\log T)^\alpha)K(T).  
\end{aligned}
\end{equation*}
Theorem \ref{theorem1general} follows by taking $B=2(c_x+c_y)(1+o(1)).$
\section{Proof of Theorem \ref{th2}}
Choose $\alpha=0$, and $c_x$ and $c_u$ in \eqref{parameters} sufficiently large such that \eqref{kvlowerbound} and \eqref{constc5} are $o(1)$. More precisely, given $A=K(T)=O(1)$, we choose $c_x$ and $c_u$ such that in \eqref{kvlowerbound}
$$\frac{c_x^3}{DA^3}>1+\varepsilon,$$
while in \eqref{constc5}
$$\frac{(c_u-2c_w)^3}{DA^3}>\frac{8}{3}+\varepsilon,$$
where $\varepsilon>0$ and $D$ is the constant in the bound $S(N,t)\ll N^{1-1/(D\lambda^2)}$ given in the hypothesis.  
Since $c_x>c_v+c_u>2c_u$, $c_x>c_y>c_u$, $c_w<c_u/2$, Theorem \ref{th2} follows by taking
\begin{equation*}
    c_u=\left(\left(\frac{8D}{3}\right)^{1/3}+1\right)A,\quad c_x=2c_u+2A,\quad c_y=c_v=c_u+A,\quad c_w=\frac{9c_u}{10}.
\end{equation*}
\section{Proof of Theorem \ref{th3}}
Denote 
\[
\omega(x)=\min_t\{\nu(t)\log x+\log t\},\quad \nu(t)=\frac{A_0}{(\log t)^{2/3}(\log\log t)^{1/3}},
\]
where $A_0$ is the zero-free region constant in \eqref{kvzerofree}.
Let $x$ be sufficiently large and choose $T=\exp(2\omega(x))$. We have
\begin{equation}\label{truncatedpsi}
    \left|\frac{\psi(x)-x}{x}\right|=\Delta(x) \leq \sum_{|\operatorname{Im}(\rho)| \leq T} \frac{x^{\operatorname{Re}(\rho)-1}}{|\operatorname{Im}(\rho)|}+O\left(\frac{(\log x)^2}{T}\right).
\end{equation}
The error term in \eqref{truncatedpsi} is negligible. Indeed, since $\omega(x) \geq 2 \log \log x$,
$$
\frac{(\log x)^2}{T}=\exp (-\omega(x)) \cdot \frac{(\log x)^2}{\exp (\omega(x))} \ll \exp (-\omega(x)) .
$$
Furthermore, let $H$ be an absolute large positive constant such that the asymptotic Korobov--Vinogradov zero-free region \eqref{kvzerofree} holds. Hence, we may just estimate
$$
\sum_{H<|\operatorname{Im}(\rho)| \leq T} \frac{x^{\operatorname{Re}(\rho)-1}}{|\operatorname{Im}(\rho)|}=\left(\sum_{\substack{H<|\operatorname{Im}(\rho)| \leq T \\ 1 / 2 \leq \operatorname{Re}(\rho) \leq \sigma_0}}+\sum_{\substack{H<|\operatorname{Im}(\rho)| \leq T \\ \sigma_0<\operatorname{Re}(\rho) \leq 1-\nu(\gamma)}}\right) \frac{x^{\operatorname{Re}(\rho)-1}}{|\operatorname{Im}(\rho)|}=\Sigma_1+\Sigma_2,
$$
where
\[
\sigma_0=1-\frac{100A_0}{(\log T)^{2/3}(\log\log T)^{1/3}}=1-\nu_1(T).
\]
Indeed, there are $O(H\log H)$ zeros with imaginary part less than $H$, and this quantity is finite since $H$ is a fixed absolute constant. Regarding $\Sigma_1$, following Pintz's argument \cite{pintz80}, we get
\begin{align*}
    \Sigma_1\ll \exp(-(1-\varepsilon)\omega_2(x)),\qquad \omega_2(x)=\min_t\{100\nu(t)\log x+\log t\}.
\end{align*}
Choosing for instance $\varepsilon=1/100$ one has $\Sigma_1\ll \exp(-\omega(x)).$
To estimate $\Sigma_2$, apply Riemann--Stieltjes integration by parts
\begin{align*}
    \Sigma_2&\ll \int_{H}^T\int_{\sigma_0}^{1-\nu(\gamma)}\frac{x^{\sigma-1}}{\gamma}\text{d}N(\sigma,\gamma)\ll \int_{H}^T\int_{\sigma_0}^{1-\nu(\gamma)} N(\sigma, \gamma) \cdot\left|\frac{\partial^2 }{\partial \sigma \partial \gamma}\left(\frac{x^{\sigma-1}}{\gamma}\right)\right| d \sigma d \gamma.
\end{align*}
Note that for every $\sigma\in [\sigma_0,1-\nu(\gamma))$, we can express $\sigma$ as $$\sigma=1-A(\log T)^{-2/3}(\log\log T)^{-1/3},$$ where $A$ is a positive constant with $100A_0\le A<A_0$. Hence, Corollary \ref{cor3} implies
\begin{align*}
    \Sigma_2&\ll\log x \int_{H}^T\int_{\sigma_0}^{1-\nu(\gamma)} \exp (55 A) \frac{x^{\sigma-1}}{\gamma^2} d \sigma d \gamma\ll \log x\int_{H}^{T}\frac{1}{\gamma^2}\int_{1-\nu_1(T)}^{1-\nu(T)}\exp (55 A)x^{\sigma-1}d\sigma d\gamma.
\end{align*}
Note that
\begin{align*}
    &\int_{1-\nu_1(T)}^{1-\nu(T)}\exp (55 A)x^{\sigma-1}d\sigma\\
    &=-(\log T)^{-2/3}(\log\log T)^{-1/3}\int_{100A_0}^{A_0} \exp(55A)x^{-A(\log T)^{-2/3}(\log\log T)^{-1/3}}dA\\&\ll \exp(55A_0)x^{-A_0(\log T)^{-2/3}(\log\log T)^{-1/3}}(\log x)^{-1}.
\end{align*}
Hence, 
\begin{equation*}
    \Sigma_2\ll \exp(55A_0)\exp(-\omega(x)),
\end{equation*}
with
\[
\omega(x)=\min_t\{\nu(t)\log x+\log t\}=\nu(t_0)\log x+\log t_0,
\]
where $t_0$ minimum. We can conclude that
\begin{equation}
    |\psi(x)-x|\ll x\exp(55A_0)\exp(-\omega(x)).
\end{equation}
\section*{Acknowledgments}
I would like to thank Bryce Kerr, Andrew Yang and my supervisor Timothy S. Trudgian for their support and helpful suggestions throughout the writing of this article.
\printbibliography
\end{document}